\DeclareMathOperator{\ad}{ad}
\newcommand{\A}{{\mathcal A}}
\newcommand{\E}{{\mathcal E}}
\newcommand{\F}{{\mathcal F}}
\newcommand{\I}{{\mathcal I}}
\renewcommand{\i}{\mathbf {in}}
\renewcommand{\o}{\mathbf {out}}
\renewcommand{\L}{{\mathcal L}}
\renewcommand{\d}{{\mathbf d}}
\newcommand{\n}{{\mathbf n}}
\newcommand{\m}{{\mathbf m}}
\newcommand{\pp}{{\mathbf p}}
\newcommand{\qq}{{\mathbf q}}
\newcommand{\N}{{\mathbb N}}
\newcommand{\C}{\ensuremath{\mathbb{C}}}
\newcommand{\R}{\ensuremath{\mathbb{R}}}
\newcommand{\p}{\partial}
\newcommand{\D}{{\mathcal D}}
\newcommand{\unit}{{\bf{1}}}
\newcommand{\U}{{\mathcal U}}
\newcommand{\V}{{\mathcal V}}
\newcommand{\Z}{\ensuremath{\mathbb{Z}}}
\newtheorem{lemma}{Lemma}
\newtheorem{proposition}{Proposition}
\newtheorem{theorem}{Theorem}
\begin{document}

\title[Gammelgaard's formula for a star product]
{On Gammelgaard's formula for a star product with separation of
variables}
\author[Alexander Karabegov]{Alexander Karabegov}
\address[Alexander Karabegov]{Department of Mathematics, Abilene
Christian University, ACU Box 28012, Abilene, TX 79699-8012}
\email{axk02d@acu.edu}

\dedicatory{This paper is dedicated to the memory of Boris Vasil'evich Fedosov}

\begin{abstract} 
We show that Gammelgaard's formula expressing a star product with
separation of variables on a pseudo-K{\"a}hler manifold in terms of directed graphs without cycles is equivalent to an inversion formula for an operator on a formal Fock space. We prove this inversion formula directly and thus offer an alternative approach to Gammelgaard's formula which gives more insight into the question why the directed graphs in his formula have no cycles.
\end{abstract}
\subjclass[2012]{53D55, 81T18}
\keywords{deformation quantization with separation of variables, Feynman diagram}
\date{April 7, 2012}
\maketitle

\section{Introduction}

Given a vector space $V$, we denote by $V[\nu^{-1},\nu]]$ the set of formal Laurent series in a formal parameter $\nu$ with a finite polar part and coefficients in $V$.
We call the elements of  $V[\nu^{-1},\nu]]$,
\[
      v = \sum_{r \geq n} \nu^r v_r,
\]
where $n \in \Z$ and $v_r \in V$, formal vectors. 

Deformation quantization on a Poisson manifold $(M,\{\cdot,\cdot\})$ is the structure of an associative algebra on the space $C^\infty(M)[\nu^{-1},\nu]]$ of formal functions given by the formula
\begin{equation}\label{E:star}
  f \ast g = \sum_{r=0}^\infty \nu^r C_r (f,g),
\end{equation}
where $C_r$ are bidifferential operators on $M$ such that $C_0(f,g) =fg$ and
\begin{equation}\label{E:pois}
C_1(f,g) - C_1(g,f)= i\{f,g\}. 
\end{equation}
The product $\ast$ is called a star product. It is also assumed that the unit constant function $\unit$ is the unity of the star product, $f \ast \unit = \unit \ast f = f$. This condition means that the operators $C_r$ with $r>0$ annihilate constants, $C_r(f, \unit)=0$ and $C_r(\unit,f)=0$.
 A star product can be restricted (localized) to any open subset of $M$. In what follows we denote by $L_f$ the operator of left star multiplication by $f$ and by $R_g$ the operator of right star multiplication by $g$ so that $L_f g = f \ast g = R_g f$. The associativity of a star product is equivalent to the statement that $[L_f,R_g]=0$ for any $f,g$.

The problem of existence and classification of star products on arbitrary Poisson manifolds was formulated in \cite{BFFLS} and settled in \cite{K}. In \cite{K}  Kontsevich obtained an explicit formula of a star product on $\R^n$ endowed with an arbitrary Poisson structure. This star product on $\R^n$ was expressed in terms of directed graphs.  On symplectic manifolds, where the Poisson structure is nondegenerate, a nice global geometric construction of star products was given by Fedosov in \cite{F}.

If $M$ is a complex manifold, a star product (\ref{E:star}) on $M$ is called a star product with separation of variables if the operators $C_r$ differentiate their first argument in antiholomorphic directions and the second argument in holomorphic ones. Formula (\ref{E:pois})   implies then  that  local holomorphic functions $a,a'$ and local antiholomorphic functions $b,b'$ Poisson commute, $\{a,a'\}=0$ and $\{b,b'\}=0$. Thus the Poisson structure is given by a Poisson tensor of type (1,1) with respect to the complex structure which can be written as $g^{lk}$ in local holomorphic coordinates $\{z^k, \bar z^l\}$.  We call such tensor $g^{lk}$ a K\"ahler-Poisson tensor and the manifold  $(M,\{\cdot,\cdot\})$ a K\"ahler-Poisson manifold.  We have
\begin{equation}\label{E:explpois}
     \{f,g\} = i g^{lk}\left(\frac{\p f}{\p z^k}\frac{\p g}{\p \bar z^l} - \frac{\p g}{\p z^k}\frac{\p f}{\p \bar z^l}\right),
\end{equation}
where the Einstein summation convention over repeated indices is used. It follows from (\ref{E:pois}), (\ref{E:explpois}), and the condition of separation of variables that 
\[
C_1(f,g) = g^{lk} \frac{\p f}{\p \bar z^l}\frac{\p g}{\p z^k}.
\]
The condition that $\ast$ is a star product with separation of variables can be restated as follows: for locally defined holomorphic function $a$ and antiholomorphic function $b$ the operators $L_a=a$ and $R_b =b$ are pointwise multiplication operators, i.e., $a \ast f = af$ and $f \ast b = bf$. It is not yet known whether star products with separation of variables exist on arbitrary K\"ahler-Poisson manifolds. If a K\"ahler-Poisson tensor $g^{lk}$ is nondegenerate, its inverse $g_{kl}$ is a pseudo-K\"ahler metric tensor which defines a global pseudo-K\"ahler form
\[
                      \omega_{-1} = i g_{kl} dz^k \land d\bar z^l
\]
on $M$. The existence of star products with separation of variables on arbitrary pseudo-K\"ahler manifolds was established independently in \cite{CMP1} and \cite{BW}. All star products with separation of variables on a  pseudo-K\"ahler manifold $(M, \omega_{-1})$  were classified in \cite{CMP1}. It was shown that such star products can be bijectively parameterized by the formal closed $(1,1)$-forms   $\omega = (1/\nu)\omega_{-1} + \omega_0 + \nu \omega_1 + \ldots$.

Star products with separation of variables originate in the context of Berezin's quantization of K\"ahler manifolds (see \cite{Ber}). Examples of star products with separation of variables obtained from Berezin's quantization on homogeneous K\"ahler manifolds were considered in \cite{Mor} and \cite{CGR}. In \cite{Sch} it was shown that Berezin-Toeplitz quantization on arbitrary compact K\"ahler manifolds gives rise to a star product which was later identified in \cite{KSch} as the opposite product of a star product with separation of variables whose parameterizing form $\omega$ was calculated. It was a very difficult task to give explicit formulas for star products with separation of variables which were known only in particular cases (see \cite{BBEW}, \cite{AL}). In \cite{RT} an explicit formula for a non-normalized star product with separation of variables on an arbitrary K\"ahler manifold was given in terms of directed graphs. Gammelgaard gave in \cite{G} a remarkable explicit formula for a general star product with separation of variables on an arbitrary K\"ahler manifold in terms of directed graphs without cycles. In \cite{Xu1} an explicit formula for Berezin star product was given in terms of strongly connected directed graphs. Then in \cite{Xu2} this formula was used to prove Gammelgaard's formula for Berezin-Toeplitz star product by inverting the formal Berezin transform.

In this paper we give an alternative proof of Gammelgaard's formula. We construct an algebra of  operators on a formal Fock space expressed in terms of directed acyclic graphs and show that Gammelgaard's formula is equivalent to an inversion formula for an operator in that algebra.  We prove a composition formula for that algebra using the formalism of pre-Feynman and Feynman diagrams developed in \cite{A} and derive the inversion formula. Our approach gives more insight  into the question why the directed graphs in Gammelgaard's formula have no cycles.

\section{A Weyl commutation relation}

 We recall the construction of a general star product with separation of variables on an arbitrary pseudo-K\"ahler manifold from \cite{CMP1}. Let  $(M, \omega_{-1})$  be a pseudo-K\"ahler manifold of complex dimension $m$ and let
\[
      \omega = \frac{1}{\nu}\omega_{-1} + \omega_0 + \nu \omega_1 + \ldots
\]
be a formal closed (1,1)-form on $M$. We call it (somewhat loosely) a formal deformation of the pseudo-K\"ahler form $\omega_{-1}$. On an arbitrary contractible coordinate chart $(U, \{z^k, \bar z^l\})$ on $M$ the closed (1,1)-forms $\omega_r$ have potentials $\Phi_r$ such that $\omega_r = i\p\bar\p \Phi_r$. The formal function
\[
      \Phi: = \frac{1}{\nu} \Phi_{-1} +  \Phi_0 + \nu  \Phi_1 + \ldots
\]
is a potential of $\omega$. It was proved in \cite{CMP1} that there exists a unique star product with separation of variables on $U$ such that
\[
      L_{\frac{\p\Phi}{\p z^k}} = \frac{\p\Phi}{\p z^k} + \frac{\p}{\p z^k} \mbox{ and }  R_{\frac{\p\Phi}{\p \bar z^l}} = \frac{\p\Phi}{\p \bar z^l} + \frac{\p}{\p \bar z^l}.
\]
This star product does not depend on the choice of the potential $\Phi$ and of local holomorphic coordinates. There is a unique global star product with separation of variables $\ast_\omega$ on $M$ that agrees with this star product on every contractible coordinate chart. Every  star product with separation of variables on $M$ is of the form $\ast_\omega$ for a uniquely determined $\omega$. We say that $\ast_\omega$ is parameterized by $\omega$. In what follows we fix $\omega$ and drop the subscript $\omega$ in $\ast_\omega$.

Let $(U, \{z^k, \bar z^l\})$ be a  contractible coordinate chart  on $M$ and let $\ast$ be the star product with separation of variables on $U$ parameterized by a formal form $\omega$. Denote by $\F(U)$ the star algebra on $U$,
\[
     \F(U) = (C^\infty(U)[\nu^{-1},\nu]],\ast),
\]
by $\L(U)$ the algebra of left star multiplication operators on $U$, and by $\D(U)$ the algebra of all $\nu$-formal differential operators on $U$, i.e, the operators of the form
\[
    A = \sum_{r \geq n} \nu^r A_r,
\] 
where $n \in \Z$ and $A_r$ is a differential operator on $U$. It was proved in \cite{CMP1} that the algebra $\L(U)$ is the subalgebra of $\D(U)$ consisting of all formal differential operators on $U$ which commute with the operators
\[
     R_{\bar z^l} = \bar z^l  \mbox{ and }  R_{\frac{\p\Phi}{\p \bar z^l}} = \frac{\p\Phi}{\p \bar z^l} + \frac{\p}{\p \bar z^l},
\]
where $\Phi$ is a potential of $\omega$. The algebra $\L(U)$ is isomorphic to $\F(U)$ via the isomorphism 
\begin{equation}\label{E:isomaaone}
\L(U) \ni A \mapsto A\unit,
\end{equation}
where $A\unit$ denotes the operator $A$ applied to the unit constant function~$\unit$. The inverse isomorphism is $\F(U) \ni f \mapsto L_f$. We have $L_f \unit = f \ast \unit = f$.

We introduce formal variables $\eta^k$ and $\zeta_k, 1 \leq k \leq m$, and consider the algebras of formal series $\L(U)[[\eta,\zeta]]$ and $\F(U)[[\eta,\zeta]]$,
where the multiplication is extended from $\L(U)$ and $\F(U)$, respectively, by $\eta$- and $\zeta$-linearity. Then $\L(U)[[\eta,\zeta]]$ is the algebra of left multiplication operators of the algebra $\F(U)[[\eta,\zeta]]$, and the mapping (\ref{E:isomaaone}) extends to an isomorphism of $\L(U)[[\eta,\zeta]]$ onto $\F(U)[[\eta,\zeta]]$.

An element $f(\nu,\eta,\zeta) \in \F(U)[[\eta,\zeta]]$ is given by a formal series
\[
   f(\nu,\eta,\zeta) = \sum_{s \in \Z,n \geq 0,t \geq 0} \nu^s f_{s, k_1 \ldots k_n}^{p_1 \ldots p_t}(z,\bar z) \eta^{k_1}\ldots \eta^{k_n} \zeta_{p_1} \ldots \zeta_{p_t},  
\]
where the coefficients $ f_{s, k_1 \ldots k_n}^{p_1 \ldots p_t}(z,\bar z)$ are separately symmetric in the indices $k_i$ and $p_i$ and where for fixed $n$ and $t$ only finitely many coefficients $ f_{s, k_1 \ldots k_n}^{p_1 \ldots p_t}(z,\bar z)$ with negative $s$ are nonzero. Otherwise speaking, for each $ f(\nu,\eta,\zeta)$ there exists a function $\kappa(n,t)$ such that $ f_{s, k_1 \ldots k_n}^{p_1 \ldots p_t}(z,\bar z) =0$ for $s < \kappa(n,t)$.

We introduce operators
\[
     Z = \zeta_p z^p \mbox{ and } H = \eta^k\left( \frac{\p\Phi}{\p z^k} + \frac{\p}{\p z^k}\right)
\]
in $\L(U)[[\eta,\zeta]]$. Their commutator
\begin{equation}\label{E:commut}
     [H,Z] = \eta^k \zeta_k
\end{equation}
lies in the center of the algebra $\L(U)[[\eta,\zeta]]$. Since both $Z$ and $H$ are in the ideal $\langle\eta,\zeta\rangle$  generated by $\eta^k$ and $\zeta_p$, the exponential series $\exp\{Z\}$ and $\exp\{H\}$ converge in the $\langle\eta,\zeta\rangle$-adic topology and lie in $\L(U)[[\eta,\zeta]]$.
\begin{lemma}
The operators $e^Z$ and $e^H$ satisfy the Weyl commutation relation
\begin{equation}\label{E:weyl}
     e^H e^Z = e^{\eta^k\zeta_k} e^Z e^H
\end{equation}
in the algebra $\L(U)[[\eta,\zeta]]$.
\end{lemma}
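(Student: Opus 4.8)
The plan is to verify the Weyl commutation relation by reducing it to the single commutator identity \eqref{E:commut} together with the fact that $[H,Z]=\eta^k\zeta_k$ is central. First I would recall the standard Baker--Campbell--Hausdorff consequence: if $A$ and $B$ are elements of an associative algebra (here $\L(U)[[\eta,\zeta]]$, with convergence understood in the $\langle\eta,\zeta\rangle$-adic topology) whose commutator $[A,B]$ is central, then $e^A e^B = e^{[A,B]} e^B e^A$. Applying this with $A=H$, $B=Z$, and $[H,Z]=\eta^k\zeta_k$ central gives exactly \eqref{E:weyl}. So the real content is to justify this BCH-type identity in the present formal setting, where the exponentials are defined as $\langle\eta,\zeta\rangle$-adically convergent series.

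The cleanest route to the auxiliary identity is to introduce the one-parameter family $F(t) = e^{tH} e^{Z} e^{-tH}$ in $\L(U)[[\eta,\zeta]][[t]]$ and differentiate: $F'(t) = e^{tH}[H,Z]e^{Z}e^{-tH} = [H,Z]\,F(t)$, using centrality of $[H,Z]$ to pull it out. Since $F(0)=e^{Z}$, this linear ODE integrates to $F(t) = e^{t[H,Z]} e^{Z}$, and at $t=1$ we get $e^{H}e^{Z}e^{-H} = e^{[H,Z]}e^{Z} = e^{\eta^k\zeta_k}e^{Z}$, which rearranges to \eqref{E:weyl}. All manipulations take place in the $\langle\eta,\zeta\rangle$-adic topology: because $H$, $Z$, and hence $[H,Z]=\eta^k\zeta_k$ all lie in the ideal $\langle\eta,\zeta\rangle$, every series in sight converges, term-by-term differentiation in $t$ is legitimate, and the ODE argument goes through at each finite order in the $\langle\eta,\zeta\rangle$-filtration.

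Alternatively, and perhaps more self-containedly, one can argue purely combinatorially without introducing $t$: expand $e^H e^Z = \sum_{m,n\ge 0} \frac{1}{m!\,n!} H^m Z^n$ and repeatedly commute the $H$'s past the $Z$'s using $HZ = ZH + \eta^k\zeta_k$ and the centrality of $\eta^k\zeta_k$. Moving one $H$ through $Z^n$ produces $Z^n H + n\,\eta^k\zeta_k\, Z^{n-1}$, and iterating over all $m$ factors yields $H^m Z^n = \sum_{j} \binom{m}{j}\binom{n}{j} j!\,(\eta^k\zeta_k)^j Z^{n-j} H^{m-j}$; resumming the triple series against $\sum 1/(m!\,n!)$ collapses, via the identity $\sum_j \frac{1}{j!}(\eta^k\zeta_k)^j \cdot (\text{rest})$, to $e^{\eta^k\zeta_k} e^Z e^H$. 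This is just the classical proof of the Weyl relation for the Heisenberg algebra, transplanted to $\L(U)[[\eta,\zeta]]$.

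I do not anticipate a genuine obstacle here: the only thing to be careful about is that the exponentials are defined by $\langle\eta,\zeta\rangle$-adic convergence rather than as operator exponentials on a Banach space, so one should note explicitly that all the formal rearrangements above are valid because, modulo any fixed power of $\langle\eta,\zeta\rangle$, only finitely many terms contribute and the algebra operations are continuous. The mild bookkeeping point worth stating is that $[H,Z]=\eta^k\zeta_k$ being central is what makes both the ODE step (pulling $[H,Z]$ out of $e^{tH}(\cdot)e^{-tH}$) and the combinatorial step (treating $\eta^k\zeta_k$ as a scalar while commuting) legitimate; this is precisely \eqref{E:commut} together with the stated fact that $\eta^k\zeta_k$ lies in the center of $\L(U)[[\eta,\zeta]]$.
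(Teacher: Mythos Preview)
Your proposal is correct and follows essentially the same route as the paper: both arguments compute the conjugate $e^{H}e^{Z}e^{-H}$ and use that $[H,Z]=\eta^k\zeta_k$ is central to reduce it to $e^{\eta^k\zeta_k}e^{Z}$. The only cosmetic difference is that the paper applies the identity $e^{H}Ze^{-H}=e^{\ad(H)}Z=Z+\eta^k\zeta_k$ directly and then exponentiates (using that $Z$ and $\eta^k\zeta_k$ commute), whereas you reach the same conclusion via the one-parameter ODE for $F(t)=e^{tH}e^{Z}e^{-tH}$; your combinatorial alternative is also valid but unnecessary once the conjugation is in hand.
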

\begin{proof}
We have, using (\ref{E:commut}),
\begin{eqnarray*}
     e^H e^Z e^{-H} = \exp\left\{ e^H Z e^{-H}   \right\} = \exp\left\{ e^{\ad(H)} Z   \right\} =\\
 \exp\left\{Z + \eta^k\zeta_k   \right\} = e^{\eta^k\zeta_k} e^Z.
\end{eqnarray*}
\end{proof}
Given a function $f(z)$, we denote by $f(z+\eta)$ the formal Taylor series
\[
        f(z+\eta) = e^{\eta^k \frac{\p}{\p z^k}}f(z).
\]
\begin{lemma}\label{L:ehunit}
   We have
\[
         e^H\unit =  e^{\Phi(z+\eta,\bar z)-\Phi(z,\bar z)}. 
\]
\end{lemma}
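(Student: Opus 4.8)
The plan is to introduce an auxiliary parameter $t$ and compare the two formal families
\[
u(t) := e^{tH}\unit \quad\text{and}\quad v(t):= e^{\Phi(z+t\eta,\bar z)-\Phi(z,\bar z)},
\]
viewed as elements of $\F(U)[[\eta]][[t]]$, and to show that each of them satisfies the linear evolution equation $\dot w = Hw$ with initial value $w(0)=\unit$; specializing to $t=1$ will then give the lemma. First I would check that $v(t)$ really lies in $\F(U)[[\eta]][[t]]$: although $\Phi$ has a first-order pole in $\nu$, the exponent $E:=\Phi(z+t\eta,\bar z)-\Phi(z,\bar z)$ belongs to the ideal $\langle\eta,\zeta\rangle$ (indeed to $\langle\eta\rangle$), so for each fixed monomial $\eta^{k_1}\dots\eta^{k_n}$ only the contributions of $E,E^2,\dots,E^n$ are relevant, and the resulting coefficient is a formal Laurent series with a pole of order at most $n$.

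For $u(t)$ the evolution equation is immediate: differentiating the exponential series termwise gives $\dot u(t)=He^{tH}\unit=Hu(t)$, with $u(0)=\unit$; equivalently, writing $u(t)=\sum_{n\geq 0} t^n u_n$ one reads off $u_n=\frac{1}{n!}H^n\unit$.

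For $v(t)$ I would argue directly. Writing $\p_k:=\p/\p z^k$, so that $H=\eta^k(\p_k\Phi+\p_k)$, and using that $\p_k$ commutes with the Taylor-shift operator $e^{t\eta^j\p_j}$, one has $\p_k\bigl(\Phi(z+t\eta,\bar z)\bigr)=(\p_k\Phi)(z+t\eta,\bar z)$, hence
\[
\eta^k\p_k v(t)=v(t)\,\eta^k\bigl[(\p_k\Phi)(z+t\eta,\bar z)-(\p_k\Phi)(z,\bar z)\bigr].
\]
Adding $\eta^k(\p_k\Phi)(z,\bar z)v(t)$ makes the $(z,\bar z)$-terms cancel and gives $Hv(t)=v(t)\,\eta^k(\p_k\Phi)(z+t\eta,\bar z)$. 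On the other hand, differentiating $v$ in $t$ directly yields $\dot v(t)=v(t)\,\frac{d}{dt}\Phi(z+t\eta,\bar z)=v(t)\,\eta^k(\p_k\Phi)(z+t\eta,\bar z)$, so $\dot v(t)=Hv(t)$, while $v(0)=\unit$. Since the solution of $\dot w=Hw$, $w(0)=\unit$ is unique in $\F(U)[[\eta]][[t]]$ — the coefficient of $t^n$ is forced to equal $\frac{1}{n!}H^n\unit$ — we conclude $u(t)=v(t)$, and then put $t=1$.

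I do not expect a genuine obstacle here; the only point requiring care is the formal bookkeeping — that $e^H$ acts on $\unit$ with values in $\F(U)[[\eta]]$ and that the termwise manipulations in $t$ are legitimate — but this is precisely the $\langle\eta,\zeta\rangle$-adic convergence already recorded just before the lemma, so it should cause no real trouble.
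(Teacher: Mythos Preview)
Your proof is correct and follows essentially the same strategy as the paper: show that both sides solve the same linear first-order initial value problem and invoke uniqueness. The paper differentiates directly in the formal variables $\eta^k$, verifying the system $\partial f/\partial \eta^k = (\partial\Phi/\partial z^k + \partial/\partial z^k)f$ with $f|_{\eta=0}=\unit$, whereas you introduce an auxiliary scalar parameter $t$ and reduce to the single equation $\dot w = Hw$; the key computational identity $\partial_k\bigl(\Phi(z+\eta,\bar z)\bigr)=(\partial_k\Phi)(z+\eta,\bar z)$ is the same in both.
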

\begin{proof}
Consider the system of differential equations
\begin{equation}\label{E:syst}
     \frac{\p f}{\p \eta^k} = \left( \frac{\p\Phi}{\p z^k} + \frac{\p}{\p z^k}\right)f, \ 1 \leq k \leq m,
\end{equation}
with the initial condition $f|_{\eta=0}=\unit$ on the space $\F(U)[[\eta]]$. Since  the operators
\[
     \frac{\p\Phi}{\p z^k} + \frac{\p}{\p z^k}, \ 1 \leq k \leq m,
\]
pairwise commute, the system (\ref{E:syst}) has a unique solution 
\[
f = e^{H}\unit =\exp\left\{\eta^k \left( \frac{\p\Phi}{\p z^k} + \frac{\p}{\p z^k}\right)\right\}\unit .
\]
The formal series $ g := \exp\{\Phi(z+\eta,\bar z)-\Phi(z,\bar z)\}$ satisfies the initial condition $g|_{\eta=0}= \unit$. Since
\[
    \frac{\p g}{\p z^k} = \left( \frac{\p \Phi(z+\eta,\bar z)}{\p z^k} -  \frac{\p \Phi(z,\bar z)}{\p z^k}\right)g,
\]
we get that $g$ satisfies the system  (\ref{E:syst}):
\[
     \frac{\p g}{\p \eta^k} = \frac{\p \Phi(z+\eta,\bar z)}{\p z^k}g=  \left( \frac{\p\Phi}{\p z^k} + \frac{\p}{\p z^k}\right)g.
\]
Therefore, the formal series $f = \exp\{H\}\unit$ and $g$ are equal.
\end{proof}
Applying the isomophism (\ref{E:isomaaone}) to both sides of the Weyl commutation relation (\ref{E:weyl}), we obtain an equality in the algebra $\F(U)[[\eta,\zeta]]$,
\begin{equation}\label{E:prodexp}
e^{\Phi(z+\eta,\bar z)-\Phi(z,\bar z)} \ast e^{\zeta_k z^k} =
e^{\zeta_k\eta^k} e^{\zeta_k z^k} e^{\Phi(z+\eta,\bar z)-\Phi(z,\bar z)}.
\end{equation}
Introduce a family of commuting operators in $\D(U)[[\eta]]$,
\begin{eqnarray}\label{E:xioper}
    \Xi_l = e^{-(\Phi(z+\eta,\bar z)-\Phi(z,\bar z))}\frac{\p}{\p \bar z^l} e^{\Phi(z+\eta,\bar z)-\Phi(z,\bar z)}=\\
\frac{\p}{\p \bar z^l} +  \frac{\p (\Phi(z+\eta,\bar z)-\Phi(z,\bar z))}{\p \bar z^l}, \ 1 \leq l \leq m. \nonumber
\end{eqnarray}
Let $\bar \eta^l,  \ 1 \leq l \leq m$, be formal antiholomorphic variables. The formal form $\omega$ determines the formal Calabi function 
\[
D(\eta,\bar\eta) = \Phi(z+\eta,\bar z + \bar \eta) -
\Phi(z+\eta,\bar z) - \Phi(z, \bar z + \bar \eta) + \Phi(z,\bar
z)
\]
in $\F(U)[[\eta,\bar\eta]]$. It does not depend on the choice of the potential $\Phi$. Observe that $D(\eta,\bar\eta)$ lies in the ideal $\langle \eta \bar\eta\rangle$ generated by the products $\eta^k\bar \eta^l$. We write formally $D(\eta,0)=0$ and $D(0, \bar\eta)=0$. 
The following lemma can be proved along the same lines as Lemma \ref{L:ehunit}.
\begin{lemma}\label{L:exiunit}
   We have
\[
         \exp\left\{\bar \eta^l \Xi_l\right\}\unit =  e^{D(\eta,\bar \eta)}. 
\]
\end{lemma}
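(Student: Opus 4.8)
The plan is to repeat the argument of Lemma~\ref{L:ehunit} with the commuting family $\p\Phi/\p z^k+\p/\p z^k$ replaced by the commuting family $\Xi_l$ and the variables $\eta^k$ replaced by $\bar\eta^l$. First I would consider, on the space $\F(U)[[\eta,\bar\eta]]$, the system of differential equations
\[
  \frac{\p f}{\p\bar\eta^l}=\Xi_l f,\qquad 1\le l\le m,
\]
with the initial condition $f|_{\bar\eta=0}=\unit$. Because the operators $\Xi_l$ pairwise commute and each $\Xi_l$ commutes with multiplication by $\bar\eta^m$, this system has a unique solution, namely $f=\exp\{\bar\eta^l\Xi_l\}\unit$. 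Concretely, writing $f=\sum_{s\ge 0}f_s$ by total degree in $\bar\eta$ and applying Euler's identity, the equations force $s\,f_s=\bar\eta^l\Xi_l f_{s-1}$, so together with $f_0=\unit$ they determine $f$ recursively and identify it with the exponential series.

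Next I would check that $g:=e^{D(\eta,\bar\eta)}$ solves the same system with the same initial condition. Since $D(\eta,0)=0$ we have $g|_{\bar\eta=0}=\unit$. Since $g=e^{D}$, we get $\p g/\p\bar\eta^l=(\p D/\p\bar\eta^l)\,g$ and $\p g/\p\bar z^l=(\p D/\p\bar z^l)\,g$, and recalling from (\ref{E:xioper}) that $\Xi_l=\p/\p\bar z^l+\p(\Phi(z+\eta,\bar z)-\Phi(z,\bar z))/\p\bar z^l$, it suffices to verify the identity
\[
  \frac{\p D}{\p\bar\eta^l}=\frac{\p D}{\p\bar z^l}+\frac{\p\bigl(\Phi(z+\eta,\bar z)-\Phi(z,\bar z)\bigr)}{\p\bar z^l}.
\]
This is a term-by-term differentiation of the definition $D(\eta,\bar\eta)=\Phi(z+\eta,\bar z+\bar\eta)-\Phi(z+\eta,\bar z)-\Phi(z,\bar z+\bar\eta)+\Phi(z,\bar z)$: differentiating in $\bar\eta^l$ affects only the two terms with argument $\bar z+\bar\eta$ and yields $(\p_{\bar z^l}\Phi)(z+\eta,\bar z+\bar\eta)-(\p_{\bar z^l}\Phi)(z,\bar z+\bar\eta)$, and the right-hand side telescopes to exactly the same expression. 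Hence $f$ and $g$ solve the same Cauchy problem, and by uniqueness $\exp\{\bar\eta^l\Xi_l\}\unit=e^{D(\eta,\bar\eta)}$.

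The main point requiring care---and really the only nontrivial point, exactly as in Lemma~\ref{L:ehunit}---is the formal bookkeeping: one must confirm that $\exp\{\bar\eta^l\Xi_l\}$ is a well-defined operator on $\F(U)[[\eta,\bar\eta]]$ (equivalently, that the recursion above terminates in each bidegree in $\eta$ and $\bar\eta$, which uses that every negative power of $\nu$ occurring in $\Phi(z+\eta,\bar z)-\Phi(z,\bar z)$ is accompanied by a positive power of $\eta$, so that $e^{D}$ indeed lies in $\F(U)[[\eta,\bar\eta]]$) and that the uniqueness statement for the ODE system holds in this completed setting. Once that is in place, the computation is the short differentiation above and uses nothing beyond the definition of the Calabi function $D$ and the commutativity of the $\Xi_l$ already recorded before the statement.
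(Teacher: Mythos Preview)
Your proposal is correct and follows exactly the approach the paper indicates: the paper does not write out a proof of this lemma but simply says it ``can be proved along the same lines as Lemma~\ref{L:ehunit},'' and your argument is precisely that adaptation---set up the commuting ODE system in the $\bar\eta$-variables, identify $\exp\{\bar\eta^l\Xi_l\}\unit$ as its unique solution, and verify by differentiating the Calabi function that $e^{D(\eta,\bar\eta)}$ solves the same initial value problem. The telescoping identity you check is the right one, and your remarks on formal well-definedness are appropriate.
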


\section{Tensor identities}

In what follows we call a tensor an indexed array of numbers defined at a point of a coordinate chart $U \subset \C^m$. We do not assume that such tensors determine coordinate-invariant geometric objects on $U$. We will use tensor notations (rather than equivalent multi-index notations) to comply with Feynman diagrams in the rest of the paper. 

Set $[m] = \{1,2,\ldots,m\}$ and denote by $\mathbf{I}$ the disjoint union of Cartesian powers of the set $[m]$,
\[
     \mathbf{I} = \bigsqcup_{n\geq 0} [m]^n.
\]
A tensor index $I \in \mathbf{I}$ with $|I|=n$ is an $n$-tuple of natural numbers, $I=  i_1 \ldots i_n$, where $i_k \leq m$. A tensor $u$ of type $(p,k)$ is a function on $\mathbf{I}^{k+p}$ written as $u_{I_1\ldots I_k}^{J_1 \ldots J_p}$. Given tensors $u^I$ and $v_J$, their contraction is defined by the formula
\begin{equation}\label{E:contr}
   u^I v_I = \sum_{I \in \mathbf{I}} u^I v_I =  \sum_{n \geq 0} u^{i_1 \ldots i_n} v_{i_1 \ldots i_n}
\end{equation}
if the series on the right-hand side has a finite number of nonzero summands. If $u^I$ and $v_J$ are formal tensors, then the series on the right-hand side of (\ref{E:contr}) can be infinite but should have a finite number of nonzero summands at the same power of the formal parameter $\nu$.

We will work with tensors that are separately symmetric in the tensor indices and refer to tensors of types (2,0), (1,1), and (0,2), as to (infinite) matrices.

Introduce the following tensor $\Delta_K^I$ separately symmetric in $I$ and $K$, such that $\Delta_K^I =0$ for $|I| \neq |K|$ and
\begin{equation}\label{E:delta}
      \Delta_{k_1\ldots k_n}^{i_1 \ldots i_n} =\frac{1}{n!} \sum_{\sigma \in S_n} \delta_{k_{\sigma(1)}}^{i_1}\ldots \delta_{k_{\sigma(n)}}^{i_n},
\end{equation}
where $S_n$ is the symmetric group. If $f_I$ and $g^K$ are symmetric tensors, then $\Delta^I_K f_I = f_K$ and $\Delta_K^I g^K = g^I$, which means that $\Delta_K^I$ is the identity matrix.
We will use the following notations  for $K = k_1 \ldots k_n$,
\[
     \eta^K = \eta^{k_1} \ldots \eta^{k_n}, \zeta_K = \zeta_{k_1}\ldots \zeta_{k_n},  \mbox{ and }  \left(\frac{\p}{\p \eta}\right)^K = \frac{\p}{\p \eta^{k_1}} \ldots \frac{\p}{\p \eta^{k_n}},
\]
and similar notations for other variables.

A star product with separation of variables $\ast$ on a coordinate chart $(U, \{z^k,\bar z^l\})$ is given by the formula
\begin{eqnarray}\label{E:tensstar}
     f \ast g = \sum_{r\geq 0} \nu^r C_r^{LK} \left(\frac{\p}{\p \bar z}\right)^L f\left(\frac{\p}{\p z}\right)^K g = \\
 C^{LK} \left(\frac{\p}{\p \bar z}\right)^L f\left(\frac{\p}{\p z}\right)^K g, \nonumber
\end{eqnarray}
where the tensor $C_r^{LK}$ is separately symmetric in $K$ and $L$ and 
\[
     C^{LK} = \sum_{r\geq 0} \nu^r C_r^{LK}
\]
is a formal tensor. Formula (\ref{E:xioper}) implies that
\begin{equation}\label{E:xij}
     e^{-(\Phi(z+\eta,\bar z)-\Phi(z,\bar z))} \left(\frac{\p}{\p \bar z}\right)^L e^{\Phi(z+\eta,\bar z)-\Phi(z,\bar z)} = \Xi_L,
\end{equation}
where  for $L = l_1 \ldots l_n$ we assume that $\Xi_L = \Xi_{l_1} \ldots \Xi_{l_n}$. Observe that
\begin{equation}\label{E:zk}
    \left(e^{-\zeta_k z^k} \left(\frac{\p}{\p z}\right)^K e^{\zeta_k z^k}\right)\unit = \zeta_K.
\end{equation}
Using Eqns. (\ref{E:prodexp}),(\ref{E:tensstar}),(\ref{E:xij}), and (\ref{E:zk}), we obtain the following formula,
\begin{equation}\label{E:fund}
       C^{LK} \left(\Xi_L\unit\right) \zeta_K = e^{\eta^k\zeta_k}.
\end{equation}

It is easy to check that
\begin{equation}\label{E:easy}
   \frac{1}{|I|!} \left(\frac{\p}{\p\zeta}\right)^I \zeta_K \bigg|_{\zeta=0} = \Delta_K^I.
\end{equation}
If a tensor $A_K^I$ is separately symmetric in $I$ and $K$, then $A_K^I \eta^K =\eta^I$ if and only if $A_K^I = \Delta_K^I$. Applying the operator $(\p/\p \zeta)^I$ to both sides of (\ref{E:fund}) and setting $\zeta=0$, we obtain that
\begin{equation}\label{E:xieta}
      \left(|I|!\right) C^{LI} \left(\Xi_L\unit\right) = \eta^I.
\end{equation}
It follows from Lemma \ref{L:exiunit} that
\begin{equation}\label{E:fundred}
     \left(\frac{\p}{\p \bar\eta}\right)^L e^{D(\eta,\bar\eta)}\bigg|_{\bar\eta = 0} = \Xi_L\unit.
\end{equation}
Writing the formal series $\exp\{D(\eta,\bar\eta)\}$ explicitly,
\begin{equation}\label{E:edseries}
     e^{D(\eta,\bar\eta)} = \sum_{t \in \Z} \nu^t E_{t, KL} \eta^K \bar \eta^L =     E_{KL} \eta^K \bar \eta^L, 
\end{equation}
where 
\[
E_{KL} = \sum_{t \in \Z} \nu^t E_{t, KL}
\]
is a formal tensor, we get from (\ref{E:fundred}) and (\ref{E:edseries}) that
\begin{equation}\label{E:etaxi}
        \left(|L|!\right) E_{KL}  \eta^K = \Xi_L\unit.
\end{equation}
Now, it follows from (\ref{E:easy}), (\ref{E:xieta}), and  (\ref{E:etaxi}) that
\begin{equation}\label{E:tensone}
     \left(|L|!\right)  E_{KL} C^{LI} \left(|I|!\right) =  \Delta_K^I .
\end{equation}
The summation over the infinite index set $\mathbf{I}$ in (\ref{E:tensone})  is such that for fixed tensor indices $I$ and $K$, at each power of the formal parameter $\nu$ only finitely many summands are nonzero.
Starting with the operators
\[
     \bar Z = \bar\zeta_q\bar z^q \mbox{ and } \bar H = \bar\eta^l\left( \frac{\p\Phi}{\p \bar z^l} + \frac{\p}{\p \bar z^l}\right)
\]
instead of $Z$ and $H$, one can prove along the same lines that
\begin{equation}\label{E:tenstwo}
     \left(|J|!\right)   C^{JK} E_{KL} \left(|K|!\right) =  \Delta_L^J .
\end{equation}
The tensor identities (\ref{E:tensone}) and (\ref{E:tenstwo}) are valid at any point of $U$. They mean that the infinite matrices
\[
      \left(|L|!\right)   C^{LK}  \left(|K|!\right) \mbox{ and } E_{KL}
\]
are inverse to each other. In the next section we will interpret these identities at a point of $U$ as inversion formulas for two operators on a formal Fock space. We summarize the results of this section in the following proposition.
\begin{proposition}\label{P:summary}
  Given a formal deformation $\omega$ of a pseudo-K\"ahler form $\omega_{-1}$ on a coordinate chart $U \subset \C^m$ with the corresponding formal Calabi function $D(\eta,\bar\eta)$, the tensor coefficients $C^{LK}$ of the star product with separation of variables $\ast_\omega$ and the tensor coefficients $E_{KL}$ of the function 
$\exp\{D(\eta,\bar\eta)\}$ satisfy the identities (\ref{E:tensone}) and (\ref{E:tenstwo}).
\end{proposition}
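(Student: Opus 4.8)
The plan is to splice together the generating-function identities developed above. First I would take the Weyl commutation relation (\ref{E:weyl}) in $\L(U)[[\eta,\zeta]]$ and push it to $\F(U)[[\eta,\zeta]]$ by applying the algebra isomorphism (\ref{E:isomaaone}), $A \mapsto A\unit$, using Lemma \ref{L:ehunit} to evaluate $e^H\unit = e^{\Phi(z+\eta,\bar z)-\Phi(z,\bar z)}$; since $Z$ is multiplication by $\zeta_p z^p$, this yields the exponential identity (\ref{E:prodexp}). Next I would expand the star product appearing there through its tensor form (\ref{E:tensstar}) and use the conjugation identity (\ref{E:xij}) to rewrite $(\p/\p\bar z)^L e^{\Phi(z+\eta,\bar z)-\Phi(z,\bar z)}$ as $e^{\Phi(z+\eta,\bar z)-\Phi(z,\bar z)}(\Xi_L\unit)$ and (\ref{E:zk}) to rewrite $(\p/\p z)^K e^{\zeta_k z^k}$ as $e^{\zeta_k z^k}\zeta_K$; cancelling the invertible exponential factors from both sides produces the fundamental identity (\ref{E:fund}), $C^{LK}(\Xi_L\unit)\zeta_K = e^{\eta^k\zeta_k}$.

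From (\ref{E:fund}) I would extract the tensor content. Applying $(\p/\p\zeta)^I$, setting $\zeta = 0$, and using the elementary evaluation (\ref{E:easy}) gives (\ref{E:xieta}), $(|I|!)\,C^{LI}(\Xi_L\unit) = \eta^I$. On the other hand Lemma \ref{L:exiunit} identifies $\exp\{\bar\eta^l\Xi_l\}\unit$ with $e^{D(\eta,\bar\eta)}$, so differentiating in $\bar\eta$ and setting $\bar\eta=0$ gives (\ref{E:fundred}); reading off the coefficients via (\ref{E:edseries}) turns this into (\ref{E:etaxi}), $(|L|!)\,E_{KL}\eta^K = \Xi_L\unit$. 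Substituting (\ref{E:etaxi}) into (\ref{E:xieta}) and using once more that a separately symmetric tensor $A_K^I$ with $A_K^I\eta^K=\eta^I$ must equal $\Delta_K^I$, I obtain the first identity (\ref{E:tensone}), $(|L|!)\,E_{KL}C^{LI}(|I|!) = \Delta_K^I$. For the second identity (\ref{E:tenstwo}) I would run the same chain of steps verbatim starting from the conjugate operators $\bar Z = \bar\zeta_q\bar z^q$ and $\bar H = \bar\eta^l\left(\frac{\p\Phi}{\p\bar z^l}+\frac{\p}{\p\bar z^l}\right)$ in place of $Z$ and $H$; the pseudo-K\"ahler data and the formal Calabi function are symmetric under swapping holomorphic and antiholomorphic directions, so this produces $(|J|!)\,C^{JK}E_{KL}(|K|!) = \Delta_L^J$ with the same tensor $E_{KL}$.

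The one step I expect to require real care is the meaning of the contractions over the infinite index set $\mathbf{I}$: for each of (\ref{E:tensone}) and (\ref{E:tenstwo}) one must verify that, with the outer indices fixed, at every power of $\nu$ only finitely many summands are nonzero. This should follow from the facts that $C^{LK}$ is an honest formal tensor whose coefficient at each order in $\nu$ is built from a bidifferential operator of finite order, and that $D(\eta,\bar\eta)$ lies in the ideal $\langle\eta\bar\eta\rangle$, which controls the polar part of $e^{D(\eta,\bar\eta)}$ in $\nu$ at each polynomial degree in $(\eta,\bar\eta)$ as recorded in (\ref{E:edseries}). Granting this, the identities say precisely that the infinite matrices $(|L|!)\,C^{LK}(|K|!)$ and $E_{KL}$ are two-sided inverses of each other, which is the content of the proposition. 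I do not anticipate any genuinely hard algebraic step beyond this bookkeeping, since every ingredient is already in place from Lemmas \ref{L:ehunit} and \ref{L:exiunit} and the Weyl relation (\ref{E:weyl}).
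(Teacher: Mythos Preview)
Your proposal is correct and follows essentially the same route as the paper: the proposition is a summary of the tensor identities derived in Section~3, and you reproduce that derivation step by step---from the Weyl relation through (\ref{E:prodexp}), (\ref{E:fund}), (\ref{E:xieta}), and (\ref{E:etaxi}) to (\ref{E:tensone}), with the conjugate argument for (\ref{E:tenstwo}). Your additional remark about the finiteness of the contractions over $\mathbf{I}$ is also exactly what the paper notes after (\ref{E:tensone}).
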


\noindent{\it Remark.} The coefficients $C^{LK}$ and $E_{KL}$ do not behave as tensors under holomorphic coordinate changes. However, it is convenient to call them tensor coefficients.

Now we want to illustrate the calculations done in this section using the simplest star product with separation of variables,
the anti-Wick star product on $\C^m$. It is parameterized by the formal form $\omega = \frac{i}{\nu}  g_{kl} dz^k \land d \bar z^l$ with the potential 
$\Phi = \frac{1}{\nu}g_{kl}z^k \bar z^l$, where $g_{kl}$ is a nondegenerate matrix with
constant coefficients. It is known (see \cite{CMP1}) that the
anti-Wick star product is given by the following explicit formula:
\begin{equation}\label{E:awick}
f \ast g = \sum_{r=0}^\infty \frac{\nu^r}{r!} g^{l_1 k_1}\ldots
g^{l_r k_r}\frac{\p^r f}{\p \bar z^{l_1}\ldots \p \bar z^{l_r}}\frac{\p^r g}{\p z^{k_1}\ldots \p z^{k_r}},
\end{equation}
where $g^{lk}$ is the inverse matrix of $g_{kl}$. This product can be written in the tensor form as (\ref{E:tensstar}), where the formal tensor $C^{LK}$ separately symmetric in $K$ and $L$ is such that $C^{LK} = 0$ if $|K| \neq |L|$ and 
\begin{equation}\label{E:wickc}
       C^{l_1 \ldots l_r k_1 \ldots k_r} = \frac{\nu^r}{(r!)^2}\sum_{\sigma \in S_r} g^{l_1 k_{\sigma(1)}}\ldots g^{l_r k_{\sigma(r)}}.
\end{equation}
The formal Calabi function $ D(\eta,\bar \eta)$ corresponding to the potential $\Phi = \frac{1}{\nu}g_{kl}z^k \bar z^l$ is
\[
     D(\eta,\bar \eta) = \frac{1}{\nu}g_{kl}\eta^k \bar \eta^l.
\]
Writing $\exp\{D(\eta,\bar \eta)\}$ in the tensor form,
\[
      e^D(\eta,\bar \eta) = E_{KL} \eta^K \bar\eta^L,
\]
we see that the formal tensor $ E_{KL}$ separately symmetric in $K$ and $L$ is such that $ E_{KL} =0$ if $|K| \neq |L|$ and 
\begin{equation}\label{E:wicke}
    E_{k_1 \ldots k_r l_1\ldots l_r} = \frac{1}{\nu^r (r!)^2}\sum_{\sigma \in S_r} g_{k_1 l_{\sigma(1)}}\ldots g_{k_r l_{\sigma(r)}}.
\end{equation}
Formulas (\ref{E:tensone}) and (\ref{E:tenstwo}) for the anti-Wick star product can be verified directly.

\section{Operators on a formal Fock space}

In this section we list elementary properties of formal Fock spaces that are used in the proofs in the rest of the paper.

The space of formal symmetric tensors $f_K = \sum_{s=0}^\infty \nu^s f_{s,K}$ can be identified with the formal Fock space $\V = \C[[\nu, \eta^1, \ldots \eta^m]]$. A tensor $f=f_K$ corresponds to the formal series $f = \nu^s f_{s, K}\eta^K$.  
We will treat $\V$ as a commutative algebra over $\C$ and denote by $\I =
\langle \nu, \eta^1, \ldots \eta^m \rangle$  the ideal in $\V$
generated by $\nu$ and $\eta^i$. We endow $\V$ with the $\I$-adic
topology. For $k \in \N$ denote by $\pi_k: \V \to \V/\I^k$ the quotient
mapping. The vector space $\V/\I^k$ is finite dimensional and
discrete in the quotient topology.

If $A: \V \to \V$ is a continuous $\C$-linear mapping, then for
any $k\in \N$ there exists $N \in \N$ such that $\I^N$ lies in
the kernel of the mapping $\pi_k \circ A: \V \to \V/\I^k$. Thus $A$
induces a mapping $\tilde A: \V/\I^N \to \V/\I^k$ such that $\pi_k
\circ A = \tilde A \circ \pi_N$. This implies that a continuous $\nu$-linear (i.e., commuting with the multiplication by $\nu$) operator $A$ on $\V$ can be given in terms of a formal tensor 
\[
A_K^I = \sum_{s = 0}^\infty \nu^s A_{s, K}^I
\]
separately symmetric in $I$ and $K$ and such that for fixed $s$ and $K$ there are finitely many nonzero entries $ A_{s, K}^I$. For $f(\nu, \eta) \in \V$, we set
\[
      (Af)(\nu,\eta) = \frac{1}{|I|!}A_K^I \eta^K \left(\frac{\p}{\p\lambda}\right)^I  f(\nu, \lambda)\bigg|_{\lambda=0}.
\]
If $f$ is written in the tensor form as $f = f_I \eta^I$, where $f_I$ is a symmetric formal tensor, then $(Af)_K = A_K^I f_I$. We will say that $A_K^I$ is the matrix of the operator $A$.
The identity operator corresponds to the tensor $\Delta_K^I$ and can be written explicitly as
\begin{equation}\label{E:ident}
     f(\nu, \eta) \mapsto \exp\left\{\eta^k \frac{\p}{\p \lambda^k}\right\} f(\nu, \lambda)\bigg |_{\lambda=0} = f(\nu, \lambda+\eta)\bigg |_{\lambda=0} = f(\nu, \eta).
\end{equation}
 Now assume that $(M,\omega_{-1})$ is a pseudo-K\"ahler manifold with the  pseudo-K\"ahler metric tensor  $g_{kl}$ and $g^{lk}$ is the corresponding K\"ahler-Poisson tensor. We introduce two formal tensors, $G_{KL}$ and $G^{LK}$, separately symmetric in $K$ and $L$ and such that $G_{KL}=G^{LK}=0$ if $|K| \neq |L|$,
\begin{equation}\label{E:gkl}
   G_{k_1 \ldots k_r l_1\ldots l_r} = \frac{1}{\nu^r (r!)^2}\sum_{\sigma \in S_r} g_{k_1 l_{\sigma(1)}}\ldots g_{k_r l_{\sigma(r)}},
\end{equation}
given by the same formula as (\ref{E:wicke}) but with $g_{kl}$ not necessarily constant, and
\[
      G^{l_1 \ldots l_r k_1 \ldots k_r} = \nu^r\sum_{\sigma \in S_r} g^{l_1 k_{\sigma(1)}}\ldots g^{l_r k_{\sigma(r)}},
\]
which differs from the right-hand side of  (\ref{E:wickc}) by a factor of  $(r!)^2$. It can be checked directly as in the case of formulas (\ref{E:tensone}) and (\ref{E:tenstwo}) for the anti-Wick star product that the matrices  $G_{KL}$ and $G^{LK}$ are inverse to each other, 
\[
G_{KL}G^{LI} = \Delta_K^I \mbox{ and } G^{JK}G_{KL} = \Delta_L^J.
\]
Let $\ast$ be a star product with separation of variables on $(M,\omega_{-1})$  corresponding to a formal deformation $\omega$ of the form $\omega_{-1}$. Fix a coordinate chart $(U, \{z^k,\bar z^l\})$  on $M$. According to Proposition \ref{P:summary}, formulas (\ref{E:tensone}) and (\ref{E:tenstwo}) hold for the tensor coefficients $C^{LK}$ of the star product $\ast$ and the tensor coefficients $E_{KL}$ of the function $\exp\{D(\eta,\bar\eta)\}$, where $D(\eta,\bar\eta)$ is the formal Calabi function corresponding to $\omega$. These formulas mean that the matrices $(|L|!) C^{LK}(|K|!)$ and $E_{KL}$ are inverse to each other.

We introduce a formal tensor $C_K^I  = \sum_s \nu^s C_{s, K}^I$ given by the formula 
\begin{equation}\label{E:cki}
   C_K^I = G_{KL}(|L|!) C^{LI}(|I|!).
\end{equation}
Observe that for the anti-Wick star product  $C_K^I = \Delta_K^I$. For $I= i_1 \ldots i_p$ and $K =k_1 \ldots k_r$, we write both sides of (\ref{E:cki}) explicitly as series in the formal parameter $\nu$:
\begin{eqnarray*}
   \sum_s \nu^s C_{s, k_1 \ldots k_r}^{i_1 \ldots i_p} = \frac{1}{\nu^r (r!)^2}\sum_{\sigma \in S_r} g_{k_1 l_{\sigma(1)}}\ldots g_{k_r l_{\sigma(r)}} \sum_{t \geq 0} \nu^{t}r! p! C_{t}^{l_1 \ldots l_r i_1 \ldots i_p}=\\
\sum_s \nu^s p! g_{k_1 l_1}\ldots   g_{k_r l_r}  C_{s+r}^{l_1 \ldots l_r i_1 \ldots i_p},
\end{eqnarray*}
whence
\begin{equation}\label{E:cski}
        C_{s, k_1 \ldots k_r}^{i_1 \ldots i_p}  =  p!\, g_{k_1 l_1}\ldots   g_{k_r l_r}  C_{s+r}^{l_1 \ldots l_r i_1 \ldots i_p}.
\end{equation}
It was proved in \cite{BW},\cite{N}, and \cite{CMP3} that deformation quantizations with separation of variables are natural in the sense of \cite{GR}. Naturality of a star product (\ref{E:star}) means that the bidifferential operator $C_r(\cdot,\cdot)$ is of order not greater than $r$ in each argument. Now (\ref{E:cski}) implies that $s+r \geq r$ and $s+r \geq p$, which means that the component $C_{s,K}^I$ is nonzero only if $s \geq  0$ and that for fixed $s$ and $K$ only finitely many entries $C_{s,K}^I$ are nonzero. Therefore, the formal tensor $C_K^I$ is the matrix of a continuous operator $C$ on the formal Fock space $\V$. Next we introduce a formal tensor $E_K^I = \sum_s \nu^s E_{s, K}^I$ given by the formula
\begin{equation}\label{E:eki}
      E_K^I =E_{KL} G^{LI}.
\end{equation}
Observe that for the anti-Wick star product  $E_K^I = \Delta_K^I$. For $I= i_1 \ldots i_p$ and $K =k_1 \ldots k_r$, we write both sides of (\ref{E:eki}) explicitly as series in the formal parameter $\nu$:
\begin{eqnarray*}
    \sum_s \nu^s E_{s, k_1 \ldots k_r}^{i_1 \ldots i_p} = \sum_t  \nu^t E_{t, k_1 \ldots k_r l_1 \ldots l_p} \nu^p \sum_{\sigma \in S_p} g^{l_1  i_{\sigma(1)}} \ldots g^{ l_p  i_{\sigma(p)}}=\\
\sum_s \nu^s p!  E_{s-p, k_1 \ldots k_r l_1 \ldots l_p} g^{l_1  i_1} \ldots g^{l_p  i_p},
\end{eqnarray*}
whence
\begin{equation}\label{E:eski}
      E_{s, k_1 \ldots k_r}^{i_1 \ldots i_p} =  p!\,  E_{s-p, k_1 \ldots k_r l_1 \ldots l_p} g^{l_1  i_1} \ldots g^{l_p  i_p}.
\end{equation}
Let us write the formal Calabi function 
\[
D(\eta,\bar \eta) = \frac{1}{\nu}D_{-1}(\eta,\bar \eta) + D_0(\eta,\bar \eta) + \nu D_1(\eta,\bar \eta) + \ldots
\]
corresponding to $\omega$ in the tensor form,
\[
     D(\eta,\bar \eta) = D_{KL}\eta^K \bar\eta^L = \sum_{r =-1}^\infty \nu^r D_{r, KL}\eta^K \bar\eta^L.
\]
Since $D(\eta,\bar \eta)$ lies in the ideal $\langle \eta \bar\eta\rangle$ generated by the products $\eta^k\bar\eta^l$, an entry $D_{r, KL}$ is nonzero only if $r + |K| \geq 0$ and $r+ |L| \geq 0$. Therefore, a tensor coefficient  $E_{r, KL}$ of $\exp\{D(\eta,\bar \eta)\}$ is nonzero also only if $r + |K| \geq 0$ and $r+ |L| \geq 0$. It follows from (\ref{E:eski}) that an entry $E_{s,K}^I$ is nonzero only if $s \geq 0$ and $s-|I| + |K| \geq 0$. Therefore, for fixed $s$ and $K$ only finitely many entries $E_{s,K}^I$ are nonzero, which means that the  formal tensor $E_K^I$ is the matrix of a continuous operator $E$ on the formal Fock space $\V$. Formulas (\ref{E:tensone}) and (\ref{E:tenstwo}) imply that the matrices $C_K^I$ and $E_K^I$ are inverse to each other. We have thus proved the following proposition.
\begin{proposition}\label{P:operce}
 Given a star product with separation of variables on a pseudo-K\"ahler manifold parameterized by a formal form $\omega$,  the formal tensors $C_K^I$ and $E_K^I$ on a coordinate chart $(U, \{z^k,\bar z^l\})$ expand in formal series in nonnegative powers of the formal parameter $\nu$,
\[
     C_K^I = \sum_{s \geq 0} \nu^s C_{s,K}^I  \mbox{ and } E_K^I = \sum_{s \geq 0} \nu^s E_{s,K}^I.
\]
Moreover, for fixed $s$ and $K$ there are finitely many nonzero entries $ C_{s,K}^I$ and $ E_{s,K}^I$, which means that at a point of $U$ the tensors $C_K^I$ and $E_K^I$ are the matrices of continuous operators $C$ and $E$ on the formal Fock space $\V$ that are inverse to each other. For the anti-Wick star product both $C$ and $E$ are equal to the identity operator.
\end{proposition}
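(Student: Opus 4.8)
The plan is to read off all the assertions from the explicit component formulas (\ref{E:cski}) and (\ref{E:eski}), the tensor identities (\ref{E:tensone})--(\ref{E:tenstwo}), and the already-established fact that the infinite matrices $G_{KL}$ and $G^{LK}$ are mutually inverse.

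The first and principal step is the finiteness statement. For $C_K^I$ I would invoke the naturality of star products with separation of variables \cite{BW, N, CMP3}: since the bidifferential operator $C_t(\cdot,\cdot)$ has order at most $t$ in each argument, its tensor coefficient $C_t^{LI}$ vanishes unless $|L|\le t$ and $|I|\le t$. Substituting this into (\ref{E:cski}), in which the superscript of $C_{s+r}$ consists of an antiholomorphic block of length $|K|$ and a holomorphic block of length $|I|$, forces $C_{s,K}^I=0$ unless $s\ge 0$ and $|I|\le|K|+s$. For $E_K^I$ I would instead use that the formal Calabi function $D$ has polar part of order at most one and lies in the ideal $\langle\eta\bar\eta\rangle$, so that $D_{r,KL}=0$ unless $r+|K|\ge 0$ and $r+|L|\ge 0$; this constraint is additive under multiplication, hence is inherited by every power $D^n$ and therefore by $\exp\{D\}$, giving $E_{r,KL}=0$ unless $r+|K|\ge 0$ and $r+|L|\ge 0$. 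Feeding this into (\ref{E:eski}) yields, exactly as for $C$, that $E_{s,K}^I=0$ unless $s\ge 0$ and $|I|\le|K|+s$. In both cases, for fixed $s$ and $K$ the index $I$ is confined to the finite set $\bigsqcup_{n\le|K|+s}[m]^n$, so only finitely many entries are nonzero; by the description of operators on $\V$ recalled at the beginning of this section, $C_K^I$ and $E_K^I$ are then the matrices of continuous $\nu$-linear operators $C$ and $E$ on $\V$, with $\nu$-expansions in nonnegative powers.

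Next I would check that $C$ and $E$ are mutually inverse. With the convention $(Af)_K=A_K^I f_I$, the matrix of the composition $C\circ E$ is $C_K^I E_I^J$. Substituting the definitions (\ref{E:cki}), (\ref{E:eki}) and contracting first over the internal index, the block $(|L|!)C^{LI}(|I|!)\,E_{IM}$ collapses by (\ref{E:tenstwo}) to $\Delta_M^L$, leaving $G_{KL}G^{LJ}=\Delta_K^J$. Symmetrically, for $E\circ C$ one contracts $G^{LI}G_{IM}=\Delta_M^L$ and then applies (\ref{E:tensone}) to get $(|L|!)E_{KL}C^{LJ}(|J|!)=\Delta_K^J$. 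All the contractions here are locally finite in the $\nu$-adic sense ($G_{KL}$ and $G^{LK}$ are block-diagonal, and (\ref{E:tensone})--(\ref{E:tenstwo}) were noted to be $\nu$-adically locally finite), so the rearrangements are legitimate; hence $C\circ E=E\circ C$ is the identity operator on $\V$. Finally, for the anti-Wick product $g_{kl}$ is constant and $D=\frac1\nu g_{kl}\eta^k\bar\eta^l$, so comparing (\ref{E:wicke}) with (\ref{E:gkl}) gives $E_{KL}=G_{KL}$, while comparing (\ref{E:wickc}) with the definition of $G^{LK}$ gives $(|L|!)C^{LK}(|K|!)=G^{LK}$; therefore $C_K^I=E_K^I=G_{KL}G^{LI}=\Delta_K^I$, i.e., $C=E$ is the identity.

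I expect the support and finiteness step to be the only genuine obstacle --- specifically, correctly extracting the degree bound on $C_t^{LI}$ from naturality and verifying that the constraint ``$r+|K|\ge 0$ and $r+|L|\ge 0$'' on the coefficients of the Calabi function passes to $\exp\{D\}$. Once these bounds are available, the invertibility of $C$ and $E$ and the anti-Wick computation are routine contractions of separately symmetric tensors.
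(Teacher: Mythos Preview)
Your proposal is correct and follows essentially the same route as the paper: naturality of the star product to bound the support of $C_{s,K}^I$ via (\ref{E:cski}); the ideal/polar-order constraint on $D$ (propagated to $\exp\{D\}$) to bound the support of $E_{s,K}^I$ via (\ref{E:eski}); then the invertibility from (\ref{E:tensone})--(\ref{E:tenstwo}) together with $G_{KL}G^{LK}$ being mutual inverses; and finally the anti-Wick identification $E_{KL}=G_{KL}$, $(|L|!)C^{LK}(|K|!)=G^{LK}$. The only place you add detail beyond the paper is your explicit remark that the constraint ``$r+|K|\ge 0$ and $r+|L|\ge 0$'' is additive under products and hence passes to $\exp\{D\}$, which the paper simply asserts.
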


\section{Gammelgaard's formula}

In this section we introduce the ingredients used in Gammelgaard's formula following \cite{G} with minor modifications. All graphs in this paper are directed multigraphs (i.e., with possibly multiple edges) with no cycles. A directed graph is a set of vertices connected by directed edges. We will consider graphs with one source vertex which has only outgoing edges (tails) and one sink that has only incoming edges (heads). The source and the sink are called external vertices. The other vertices are called internal. A graph may have no internal vertices. We define a set of types of internal vertices 
\begin{equation}\label{E:types}
      \mathbf{T} = \{(p,q,r) \in \Z^3| p \geq 1, q \geq 1, r \geq -1, p+q+r \geq 2\}.
\end{equation}
Each internal vertex of type $(p,q,r)\in \mathbf{T}$ has $p$ incoming edges, $q$ outgoing edges, and weight $r$. An isomorphism of two graphs is a bijective mapping of vertices to vertices and edges to edges which preserves the types of the internal vertices and the way vertices are connected by edges. In particular, the source and the sink of one graph are mapped to the source and the sink of the other graph, respectively. We will denote by $\A$ the set of isomorphism classes of directed acyclic weighted graphs with one source and one sink. For brevity sake, we will say that a graph $\Gamma$  whose isomorphism class $[\Gamma]$  is  in $\A$ is a graph from $\A$.

We recall the definitions of the partition function and the weight of a graph $\Gamma$ from $\A$, introduced in \cite{G}. Let $\ast$ be a deformation quantization with separation of variables on a pseudo-K\"ahler manifold $(M,\omega_{-1})$ with the characterizing form $\omega$,
\[
     \Phi = \frac{1}{\nu}\Phi_{-1} + \Phi_0 + \nu \Phi_1 + \ldots
\]
be a potential of $\omega$ on a contractible holomorphic coordinate chart $(U, \{z^k,\bar z^l\})$, and  $\Gamma$ be a graph from $\A$.  To each internal vertex of type $(p,q,r)$ of $\Gamma$ Gammelgaard relates the tensor
\begin{equation*}
                 -\frac{\p^{p+q}\Phi_r}{\p z^{k_1}\ldots \p z^{k_p}  \p \bar z^{l_1}\ldots \p \bar z^{l_q}}.
\end{equation*}
The source with $\qq$ outgoing edges and the sink with $\pp$ incoming edges correspond to the tensors
\begin{equation*}
      \frac{\p^\qq f}{ \p \bar z^{l_1}\ldots \p \bar z^{l_\qq}} \mbox{ and } \frac{\p^\pp g}{\p z^{k_1}\ldots \p z^{k_\pp}},
\end{equation*}
respectively. 
The tensors corresponding to two vertices connected by an edge are contracted by the tensor $g^{lk}$ so that the tail corresponds to the antiholomorphic index $l$ and the head corresponds to the holomorphic index $k$. The resulting function is the partition function of the graph $\Gamma$ introduced in \cite{G}. We denote it by $G_\Gamma(f,g)$. 

We define the type of a graph $\Gamma$ as a triple $(\pp,\qq,\m)$, where $\pp$ and $\qq$ are the numbers of edges incident to the sink and the source, respectively, and $\m$ is the multiplicity function of internal vertices of $\Gamma$, i.e., $\m(p,q,r)$ is the multiplicity of vertices of type $(p,q,r)$ in $\Gamma$. The data $\pp,\qq,$ and $\m$ are not independent. The number of heads in $\Gamma$  is equal to the number of tails,
\begin{equation}\label{E:nedges}
      \pp + \sum_{(p,q,r)\in \mathbf{T}} p\, \m(p,q,r) = 
 \qq + \sum_{(p,q,r)\in \mathbf{T}} q\, \m(p,q,r).
\end{equation}
Denote this number by $N(\Gamma)$. It is also the number of edges in $\Gamma$.

We define the weight of a graph $\Gamma$ as follows.  The weight of each edge is one, the weight of an  internal vertex of type $(p,q,r)$ is $r$, and the weight of an external vertex is zero. The weight $W(\Gamma)$ of a graph $\Gamma$ is the sum of weights of all vertices and edges,
\[
      W(\Gamma) = N(\Gamma) + \sum_{(p,q,r)\in \mathbf{T}} r\,\m(p,q,r).
\]
 The partition function and the weight of a graph $\Gamma$ depend only on its isomorphism class $[\Gamma] \in \A$.

{\it Example.} The type of a graph  $\Gamma$ with one internal vertex of type $(1,2,3)$,
\begin{equation}\label{E:graph}
 { \xygraph{
!{<0cm,0cm>;<1cm,0cm>:<0cm,0cm>::}
!{(0,0) }*+{\circ_{\mathbf{in}}}="a"
!{(1,0) }*+{\bullet_{3}}="b"
!{(2.5,0) }*+{\circ_{\mathbf{out}}}="c"
"a":"b"
"b":@/^/"c"
"b":@/_/"c"
} }, 
\end{equation}
is $(2,1,\m)$, where $\m(1,2,3) =1$ and $\m(p,q,r)=0$ for all other vertex types.  The partition function $G_\Gamma(f,g)$ of the graph $\Gamma$ is
\[
      G_\Gamma(f,g) = -\frac{\p f}{\p \bar z^{l_1}} g^{l_1k_1} \frac{\p^3 \Phi_3}{\p z^{k_1} \p \bar z^{l_2} \p \bar z^{l_3}}g^{l_2k_2}g^{l_3k_3} \frac{\p^2 g}{\p z^{k_2} \p z^{k_3}}
\]
and the weight of $\Gamma$ is $W(\Gamma) = 6$. The group of automorphisms of $\Gamma$ is $Aut(\Gamma) = \Z_2$. It permutes the two edges connecting the internal vertex with the sink.

Gammelgaard discovered in \cite{G} a beautiful explicit formula for the star product $\ast$,
\begin{equation}\label{E:gammel}
     f \ast g = \sum_{[\Gamma] \in \A} \frac{\nu^{W(\Gamma)}}{|Aut(\Gamma)|} G_\Gamma(f,g),
\end{equation}
where $|Aut(\Gamma)|$ is the order of the automorphism group $Aut(\Gamma)$ of a graph $\Gamma$ and the summation is over a set of representatives of the isomorphism classes $\A$.

Denote by $\Lambda_n$ the graph from $\A$ with no internal vertices and $n$ edges. We have $W(\Lambda_n) = n$ and $Aut(\Lambda_n) = S_n$, the symmetric group of degree $n$. The partition functions of the graphs $\Lambda_0$ and $\Lambda_1$,
\[ { \xygraph{
!{<0cm,0cm>;<1.5cm,0cm>:<0cm,0cm>::}
!{(0,0) }*+{\circ_{\mathbf{in}}}="a"
!{(1,0) }*+{\circ_{\mathbf{out}}}="b"
} } \mbox{ and }
 { \xygraph{
!{<0cm,0cm>;<1.5cm,0cm>:<0cm,0cm>::}
!{(0,0) }*+{\circ_{\mathbf{in}}}="a"
!{(1,0) }*+{\circ_{\mathbf{out}}}="b"
"a":"b"
} },\]
are, respectively, $G_{\Lambda_0}(f,g) = fg = C_0(f,g)$ and
\[
   G_{\Lambda_1}(f,g) =g^{lk} \frac{\p f}{\p \bar z^l}\frac{\p g}{\p z^k} = C_1(f,g).
\]

The anti-Wick star product (\ref{E:awick}) is given by the Gammelgaard's formula
\begin{equation}\label{E:gamawick}
    f \ast g = \sum_{r=0}^\infty \frac{\nu^r}{r!} G_{\Lambda_r}(f,g).
\end{equation}
\section{Partition operators}\label{S:oper}

Let $\ast$ be a deformation quantization with separation of variables on a pseudo-K\"ahler manifold $(M,\omega_{-1})$ with the characterizing form $\omega$,
\[
     \Phi = \frac{1}{\nu}\Phi_{-1} + \Phi_0 + \nu \Phi_1 + \ldots
\]
be a potential of $\omega$ on a contractible holomorphic coordinate chart $(U, \{z^k,\bar z^l\})$, and  $\Gamma$ be a graph from $\A$ of type $(\pp,\qq,\m)$.  We will relate to $\Gamma$ a tensor $\hat\Gamma_K^I$ separately symmetric in $I$ and $K$ and such that $\hat\Gamma_K^I = 0$ unless $|I|=\qq$ and $|K| = \pp$ as follows.  To each internal vertex of type $(p,q,r)$ of $\Gamma$ we relate the tensor
\begin{equation*}
                 -\frac{\p^{p+q}\Phi_r}{\p z^{k_1}\ldots \p z^{k_p}  \p \bar z^{l_1}\ldots \p \bar z^{l_q}} g^{l_1i_1}\ldots  g^{l_qi_q}.
\end{equation*}
To each edge connecting the source directly with the sink we relate the Kronecker tensor $\delta_k^i$. To each edge connecting two internal vertices we relate a contraction of an upper index corresponding to the tail vertex with a lower index corresponding to the head vertex. Then we symmetrize the resulting tensor in the free lower and upper indices separately. The symmetrized tensor is denoted $\hat\Gamma_K^I$. The tensor $\hat\Gamma_K^I$ is the matrix of an operator $\hat\Gamma$ on the formal Fock space $\V$. This operator can be called the partition operator of the graph $\Gamma$. Isomorphic graphs have equal partition operators.

{\it Example.} The partition operator $\hat \Gamma$ of the graph $\Gamma$ given by (\ref{E:graph}) is
\[
      (\hat \Gamma f)(\nu,\eta) = -\frac{1}{2}\eta^{k_1}\frac{\p^3 \Phi_3}{\p z^{k_1} \p \bar z^{l_1} \p \bar z^{l_2}}g^{l_1i_1}g^{l_2i_2} \frac{\p}{\p\lambda^{i_1}}\frac{\p}{\p\lambda^{i_2}} f(\nu,\lambda)\bigg|_{\lambda=0}.
\]

{\it Example.} The operator $\hat \Lambda_n$ projects a formal symmetric tensor $f_K$ onto its component with $|K|=n$. The operator
\begin{equation}\label{E:ident}
      \sum_{n=0}^\infty \hat \Lambda_n
\end{equation}
is the identity operator on $\V$ (see formulas (\ref{E:delta}), (\ref{E:ident}), and (\ref{E:gamawick})).

We define the operator weight of an internal vertex of type $(p,q,r)$ to be $q+r$. Since for $(p,q,r) \in \mathbf{T}$ the inequalities $q \geq 1$ and $r \geq -1$ hold, we have $q+r \geq 0$.  The operator weight $\hat W(\Gamma)$ of a graph $\Gamma$ is defined as the sum of operator weights of all internal vertices,
\begin{equation}\label{E:sweight}
      \hat W(\Gamma) = \sum_{(p,q,r)\in \mathbf{T}} (q+r)\m(p,q,r).
\end{equation}
It depends only on the type of the graph $\Gamma$. Clearly, $\hat W(\Gamma) \geq 0$. We will use also the notation $\hat W(\pp,\qq,\m)$ for the sum in (\ref{E:sweight}).

\begin{lemma}\label{L:ops}
Any infinite linear combination
\begin{equation}\label{E:ops}
      \sum_{[\Gamma]\in\A} \nu^{\hat W(\Gamma)} c_\Gamma \hat \Gamma,
\end{equation}
where $c_\Gamma$ is a complex-valued function on $\A$, determines a continuous operator on the formal Fock space $\V$.
\end{lemma}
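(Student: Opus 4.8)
The plan is to show that for each fixed pair of tensor indices $K$ and each fixed power $\nu^s$, only finitely many graphs $[\Gamma]\in\A$ contribute to the coefficient $(\nu^{\hat W(\Gamma)}c_\Gamma\hat\Gamma)_K^I$ at that power of $\nu$, and that among these the values of $I$ range over a finite set. Once this finiteness is established, the sum (\ref{E:ops}) is a well-defined formal tensor $A_K^I=\sum_s\nu^s A_{s,K}^I$ which is separately symmetric in $I$ and $K$ and has, for fixed $s$ and $K$, only finitely many nonzero entries; by the discussion in the section on operators on a formal Fock space, such a tensor is precisely the matrix of a continuous operator on $\V$.

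First I would examine the $\nu$-degree of $\hat\Gamma_K^I$ itself. The tensor relates to each internal vertex of type $(p,q,r)$ a factor $-\p^{p+q}\Phi_r/(\p z\cdots\p\bar z\cdots)\,g^{l_1i_1}\cdots g^{l_qi_q}$; since $\Phi$ is a Laurent series starting at $\nu^{-1}$, the factor $\Phi_r$ carries $\nu$-degree exactly $r$, and there are no other sources of $\nu$ in $\hat\Gamma$ (the Kronecker tensors $\delta_k^i$ for source-sink edges and the metric contractions carry no $\nu$). Hence $\hat\Gamma_K^I$ is homogeneous in $\nu$ of degree $\sum_{(p,q,r)}r\,\m(p,q,r)$, and therefore the summand $\nu^{\hat W(\Gamma)}c_\Gamma\hat\Gamma$ has $\nu$-degree
\[
      \hat W(\Gamma) + \sum_{(p,q,r)\in\mathbf{T}} r\,\m(p,q,r) = \sum_{(p,q,r)\in\mathbf{T}} (q+2r)\,\m(p,q,r),
\]
using (\ref{E:sweight}). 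For $(p,q,r)\in\mathbf{T}$ we have $q\geq 1$ and $r\geq -1$, so $q+2r\geq -1$; but if $r=-1$ then $p+q+r\geq 2$ forces $q\geq 2$, hence $q+2r\geq 0$, and in all cases $q+2r\geq q\geq 1$ when $r\geq 0$. In any event $q+2r\geq 0$ with strict positivity unless the vertex has $q+2r=0$, which is impossible in $\mathbf{T}$; so each internal vertex contributes strictly positively to the $\nu$-degree, whence a graph contributing at power $\nu^s$ has at most $s$ internal vertices, each of a type $(p,q,r)$ with $q+2r\leq s$ and hence $q\leq s+2$ and $-1\leq r\leq s$. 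This bounds $q$ and $r$; one also needs to bound $p$, which I address next.

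Since $\hat\Gamma_K^I=0$ unless $|K|=\pp$, fixing $K$ fixes $\pp$. By the edge-count identity (\ref{E:nedges}), $N(\Gamma)=\pp+\sum p\,\m(p,q,r)=\qq+\sum q\,\m(p,q,r)$; the second expression, together with the bounds $\qq=|I|$ and $q\leq s+2$ and at most $s$ internal vertices, would bound $N(\Gamma)$ once $\qq$ is bounded — and $\qq$ is bounded because $|I|=\qq$ and we are computing a fixed entry with $I$ in some finite range, or more intrinsically because a continuous operator is determined by its action modulo $\I^k$ for each $k$. Conversely, from $\pp+\sum p\,\m=\qq+\sum q\,\m$ and the positivity bound $\sum(q+2r)\m\leq s$ one deduces $\sum q\,\m\leq s + \sum 2|r|\,\m \leq s+ 2\cdot(\text{number of internal vertices})\leq 3s$ (since $r\geq -1$), hence $\sum p\,\m = \qq - \pp + \sum q\,\m$ is bounded, which bounds each $p$ (as $p\geq 1$) and the total number of internal vertices. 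Thus for fixed $s$ and $K$ only finitely many isomorphism types $(\pp,\qq,\m)$ occur, each with $|I|=\qq$ bounded, so finitely many $(s,K,I)$-entries are affected; and there are only finitely many graphs from $\A$ of a given type, since an acyclic graph on a bounded number of vertices with a bounded number of edges has finitely many isomorphism classes. Summing the contributions of these finitely many graphs produces a well-defined entry $A_{s,K}^I$, and the resulting tensor is the matrix of a continuous operator on $\V$.

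I expect the main obstacle to be the bookkeeping that couples the $\nu$-degree bound with the edge-count identity (\ref{E:nedges}): one must simultaneously control the number of internal vertices, the valences $p,q$ at each, the weights $r$, and the external valences $\pp,\qq$, and check that the relevant linear inequalities over $\mathbf{T}$ genuinely confine all of these to finite ranges once $s$ and $\pp$ are fixed. The key input making this work is that every internal vertex has strictly positive contribution $q+2r\geq 1$ to the total $\nu$-degree of a summand in (\ref{E:ops}); this is exactly where the definition $\hat W(\Gamma)=\sum(q+r)\m$ together with $r\geq -1$, $q\geq 1$, and $p+q+r\geq 2$ earns its keep.
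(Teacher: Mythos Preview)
Your argument contains a genuine error at the outset: the partition tensor $\hat\Gamma_K^I$ carries no $\nu$ at all. Each internal vertex of type $(p,q,r)$ contributes a derivative of $\Phi_r$, and $\Phi_r$ is the \emph{coefficient} of $\nu^r$ in the expansion $\Phi=\nu^{-1}\Phi_{-1}+\Phi_0+\nu\Phi_1+\ldots$, hence a plain function with no $\nu$. Thus the $\nu$-degree of $\nu^{\hat W(\Gamma)}c_\Gamma\hat\Gamma$ is exactly $\hat W(\Gamma)=\sum_{(p,q,r)}(q+r)\m(p,q,r)$, not $\sum(q+2r)\m$.

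Even setting that aside, your claim that each internal vertex contributes strictly positively is false. You write ``if $r=-1$ then $p+q+r\geq 2$ forces $q\geq 2$'', but the constraint only forces $p+q\geq 3$; for instance $(p,q,r)=(2,1,-1)\in\mathbf{T}$, where your $q+2r=-1$ and the correct $q+r=0$. So vertices of type $(p,1,-1)$ with $p\geq 2$ have zero operator-weight contribution, and a graph with arbitrarily many such vertices can still have $\hat W(\Gamma)=s$. Your bound ``at most $s$ internal vertices'' therefore fails, and the subsequent estimates (e.g.\ $\sum q\,\m\leq 3s$) collapse with it.

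The paper's proof avoids this by fixing both $s=\hat W(\Gamma)$ and the output $\eta$-degree $\qq$, then invoking the edge-count identity (\ref{E:nedges}) in the form $\qq+s=\pp+\sum(p+r)\m$. This yields a second family of bounds $(p+r)\m(p,q,r)\leq\qq+s$, and the key observation is that $p+q+r\geq 2$ forces at least one of $p+r\geq 1$ or $q+r\geq 1$, so every multiplicity is bounded by $\qq+s$. Together with the bounds on $p,q,r$ themselves this leaves only finitely many types and finitely many graphs of each type. Your outline never produces the $(p+r)$ bound, which is precisely what controls the troublesome $(p,1,-1)$ vertices.
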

\begin{proof}

Let $\Gamma$ be a graph of type $(\pp,\qq,\m)$. For any $k$ the composition 
\begin{equation}\label{E:overi}
\pi_k \left(\nu^{\hat W(\Gamma)} c_\Gamma\hat \Gamma\right): \V \to \V/\I^k
\end{equation}
is zero if $\qq + \hat W(\Gamma) \geq k$. In order to prove the statement of the lemma we fix $\qq \geq 1$ and $s \geq 0$ and show that there are finitely many isomorphism classes of graphs in $\A$ of operator weight $s$ and with the source of degree  $\qq$. This implies that the composition (\ref{E:overi}) is nonzero only for finitely many summands in (\ref{E:ops}) and therefore  (\ref{E:ops}) gives a well-defined continuous operator on $\V$.

It follows from formulas  (\ref{E:nedges}) and (\ref{E:sweight}) that
\begin{equation}\label{E:diff}
     \qq + s = \pp + \sum_{(p,q,r)} (p+r)\m(p,q,r).
\end{equation}
 We have from (\ref{E:sweight}), (\ref{E:diff}), and the inequalities $p \geq 1, q \geq 1$, and $r \geq -1$ that
\[
    (q+r)\m(p,q,r) \leq s, (p+r)\m(p,q,r) \leq \qq + s, \mbox{ and } \pp \leq \qq + s.
\]
We see that if $\m(p,q,r) \geq 1$, then $p \leq \qq + s +1, q \leq s+1$, and $r \leq s-1$. It follows from the inequality $p+q+r \geq 2$ that $p+r \geq 1$ or $q+r \geq 1$, which implies that $\m(p,q,r) \leq \qq + s$. We have shown that the joint support of the multiplicity functions of all such graphs is finite and these multiplicity functions are uniformly bounded. Also, the degree $\pp$ of the sink of all these graphs has a common upper bound.  There exist finitely many isomorphism classes of such graphs. \end{proof}

Given a graph $\Gamma$ from $\A$, denote by $\pp(\Gamma)$ the degree of its sink. Gammelgaard's formula (\ref{E:gammel}) is equivalent to the following statement.
\begin{theorem}\label{T:operc}
 Given a star product with separation of variables $\ast$ on a pseudo-K\"ahler manifold $M$ parameterized by a formal form $\omega$, the operator $C$ of the star product $\ast$ given by the matrix (\ref{E:cki}) on a coordinate chart $U \subset M$ is expressed by the following explicit formula,
\begin{equation}\label{E:operc}
      C = \sum_{[\Gamma] \in \A} \frac{\nu^{\hat W(\Gamma)}\pp(\Gamma)!}{|Aut(\Gamma)|}\, \hat \Gamma.
\end{equation}
\end{theorem}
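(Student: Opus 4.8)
The plan is to show that the operator identity (\ref{E:operc}) is equivalent, term by term, to Gammelgaard's formula (\ref{E:gammel}); since (\ref{E:gammel}) was established in \cite{G}, this proves the theorem. First I would recast the star product as a pairing involving the operator $C$. Combining the tensor form (\ref{E:tensstar}) with the definition (\ref{E:cki}) and the fact that the matrices $G_{KL}$ and $G^{LK}$ are inverse, one gets $(|L|!)(|I|!)\,C^{LI}=G^{LK}C_K^I$, and hence, for local functions $f,g$ on $U$,
\[
   f\ast g=\phi^K(C\hat g)_K,
\]
where $\hat g$ is the Taylor tensor $\hat g_K=\tfrac{1}{|K|!}(\p/\p z)^K g$ (so that $g(z+\eta)=\hat g_K\eta^K$) and $\phi^K=\tfrac{1}{|K|!}G^{LK}(\p/\p\bar z)^L f$ is obtained from the antiholomorphic derivatives of $f$ by raising indices with $G^{\bullet\bullet}$. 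The right-hand side is the natural pairing of the tensor $\phi$ with the vector $C\hat g\in\V$. As $f,g$ range over functions with arbitrarily prescribed derivatives at a point of $U$, the tensor $\hat g$ runs over all formal symmetric tensors and, since $G^{\bullet\bullet}$ is nondegenerate and $\nu$ is not a zero divisor, so in effect does $\phi$; thus the values of this pairing for all $f,g$ determine a continuous operator on $\V$. Hence it suffices to show that the operator on the right of (\ref{E:operc}) satisfies the same identity.

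The next and principal step is the \emph{dictionary identity}
\[
   \pp(\Gamma)!\,\phi^K(\hat\Gamma\hat g)_K=\nu^{\qq}\,G_\Gamma(f,g)
\]
for each $\Gamma\in\A$ of type $(\pp(\Gamma),\qq,\m)$, where $\qq$ is the degree of the source. I would prove it by tracking where every tensor goes. Both sides are contractions of the same ingredients — the $\pp(\Gamma)$-fold holomorphic derivative of $g$ at the sink, the $\qq$-fold antiholomorphic derivative of $f$ at the source, the tensors $-\p^{p+q}\Phi_r$ at the internal vertices of type $(p,q,r)$, and one copy of the matrix $g^{lk}$ per edge — which are assembled on the right exactly as in the definition of $G_\Gamma(f,g)$. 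On the left, the $\qq$ copies of $g^{lk}$ carried by the edges leaving the source have been absorbed, together with the factor $\nu^{\qq}$, into the definition of $\phi$; the copies of $g^{lk}$ on the edges whose tail is an internal vertex have been absorbed into the internal-vertex tensors of $\hat\Gamma$, while an edge joining the source directly to the sink instead carries the Kronecker tensor, whose two free indices are then contracted against $\phi$ and $\hat g$. The separate symmetrizations over heads and tails built into $\hat\Gamma$, and the one in $\hat g$, contribute nothing since all contractions are against symmetric tensors; the only residual numerical discrepancy is the single overall factor $1/\pp(\Gamma)!$ coming from the normalization of $\hat g_K$, which is cancelled by the $\pp(\Gamma)!$ on the left. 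For the weights, the definitions of $W$ and $\hat W$ with the edge-count identity (\ref{E:nedges}) give $W(\Gamma)-\hat W(\Gamma)=N(\Gamma)-\sum_{(p,q,r)}q\,\m(p,q,r)=\qq$, so multiplying the dictionary identity by $\nu^{\hat W(\Gamma)}/|Aut(\Gamma)|$ yields $\tfrac{\nu^{\hat W(\Gamma)}\pp(\Gamma)!}{|Aut(\Gamma)|}\phi^K(\hat\Gamma\hat g)_K=\tfrac{\nu^{W(\Gamma)}}{|Aut(\Gamma)|}G_\Gamma(f,g)$.

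Summing over the isomorphism classes $[\Gamma]\in\A$ — the left-hand sum converges to a continuous operator applied to $\hat g$ by Lemma \ref{L:ops}, and the right-hand sum is Gammelgaard's formula (\ref{E:gammel}) — I would get
\[
   \phi^K\left(\Big(\sum_{[\Gamma]\in\A}\tfrac{\nu^{\hat W(\Gamma)}\pp(\Gamma)!}{|Aut(\Gamma)|}\hat\Gamma\Big)\hat g\right)_K=\sum_{[\Gamma]\in\A}\tfrac{\nu^{W(\Gamma)}}{|Aut(\Gamma)|}G_\Gamma(f,g)=f\ast g=\phi^K(C\hat g)_K,
\]
and since the pairing determines the operator this gives (\ref{E:operc}); reading the computation backwards shows (\ref{E:operc}) implies (\ref{E:gammel}), establishing the equivalence. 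The hard part is the dictionary identity of the second step: one must be scrupulous about the normalization and combinatorial factors — in particular that $|Aut(\Gamma)|$ is literally the same quantity in the partition-function and partition-operator pictures (it is, being purely graph-theoretic, and both $G_\Gamma$ and $\hat\Gamma$ depend only on $[\Gamma]$), that each edge's metric factor is accounted for exactly once, and that the head- and tail-symmetrizations create no spurious multiplicities when paired with symmetric tensors. The convergence bookkeeping that would otherwise be delicate is already furnished by Proposition \ref{P:operce} and Lemma \ref{L:ops}.
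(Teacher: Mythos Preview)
Your proposal is correct and follows essentially the same approach as the paper's proof: both establish the term-by-term equivalence of (\ref{E:operc}) with Gammelgaard's formula (\ref{E:gammel}) via the relation $\hat\Gamma_K^I=\nu^{\qq}\,\qq!\,G_{KL}G_\Gamma^{LI}$ together with the weight identity $W(\Gamma)=\hat W(\Gamma)+\qq$. The paper carries this out directly at the level of matrix entries (checking equation (\ref{E:contrib})), whereas you package the same computation as your ``dictionary identity'' through the pairing $f\ast g=\phi^K(C\hat g)_K$, but the underlying content is identical.
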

\begin{proof}
 Consider the contribution of the isomorphism class of a graph $\Gamma$ of type $(\pp,\qq, \m)$ from $\A$ to the tensor $C^{LK}$ of the star product $\ast$ in Gammelgaard's formula (\ref{E:gammel}) and to the right-hand side of (\ref{E:operc}).
First, write the partition function $G_\Gamma(f,g)$ in the tensor form,
\[
      G_\Gamma(f,g) = G_\Gamma^{LK}  \left(\frac{\p}{\p \bar z}\right)^L f\left(\frac{\p}{\p z}\right)^K g,
\]
where $G_\Gamma^{LK}$ is separately symmetric in $K$ and $L$. According to (\ref{E:cki}), in order to prove (\ref{E:operc}) we have to verify the equality
\begin{equation}\label{E:contrib}
       \frac{\nu^{\hat W(\Gamma)}\pp!}{|Aut(\Gamma)|}\, \hat \Gamma_K^I =G_{KL}\qq! \frac{\nu^{W(\Gamma)}}{|Aut(\Gamma)|} G_\Gamma^{LI}\pp!.
\end{equation}
By construction,
\[
    \hat \Gamma_{k_1 \ldots k_\qq}^{i_1 \ldots i_\pp} = g_{k_1 l_1}\ldots g_{k_\qq l_\qq} G_\Gamma^{l_1 \ldots l_\qq i_1 \ldots i_\pp}.
\]
It follows from (\ref{E:gkl}) that
\[
      \hat \Gamma_K^I = \nu^\qq \qq! G_{KL}G_\Gamma^{LI}.
\]
Now, (\ref{E:contrib}) immediately follows from the definitions of $W(\Gamma)$ and $\hat W(\Gamma)$.
\end{proof}
The fact that the operator $C$ continuous on the formal Fock space $\V$ is given by formula (\ref{E:operc}) agrees with Lemma \ref{L:ops}. 

Given a graph $\Gamma$ from $\A$, denote by $R(\Gamma)$ the number of internal vertices of $\Gamma$. Denote by $\U$ the set of equivalence classes of graphs from $\A$ such that their internal vertices are not connected by edges. Given a graph $\Gamma$ of type $(\pp,\qq, \m)$ from $\U$, its group of automorphisms $Aut(\Gamma)$ independently permutes the internal vertices of the same type, the outgoing edges of each internal vertex, the incoming edges of each internal vertex, and the edges directly connecting the source with the sink.  Thus,
\begin{equation}\label{E:autm}
    |Aut(\Gamma)| =D(\Gamma)! \prod_{(p,q,r) \in \mathbf{T}}\left(p! q!\right)^{\m(p,q,r)}\m(p,q,r)!,
\end{equation}
where 
\[
      D(\Gamma) = \qq - \sum_{(p,q,r) \in \mathbf{T}} p\, \m(p,q,r) = \pp - \sum_{(p,q,r) \in \mathbf{T}} q\, \m(p,q,r)
\]
is the number of edges directly connecting the source with the sink. 

For every graph type $(\pp,\qq,\m)$ there exists only one equivalence class of graphs in $\U$ of that type. It is obtained by connecting all internal vertices directly to the sink and the source and connecting the source and the sink directly by the remaining edges.

The operator $E$ on the formal Fock space $\V$ corresponding to a formal form $\omega$ can be expressed in terms of the  partition operators of the graphs from $\U$.
\begin{theorem}\label{E:opere}
Let $\omega$ be a formal deformation  of a pseudo-K\"ahler form $\omega_{-1}$ on a coordinate chart $U \subset \C^m$ and $D(\eta,\bar \eta)$ be the corresponding formal Calabi function, then the operator $E$ corresponding to the function $\exp\{D(\eta,\bar \eta)\}$ introduced in Proposition \ref{P:operce}  is given by the following explicit formula,
\begin{equation}\label{E:thm}
      E = \sum_{[\Gamma] \in \U} \frac{(-1)^{R(\Gamma)}\nu^{\hat W(\Gamma)}\pp(\Gamma)!}{|Aut(\Gamma)|} \hat \Gamma.
\end{equation}
\end{theorem}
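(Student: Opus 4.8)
The plan is to evaluate both sides of (\ref{E:thm}) on an arbitrary $f(\nu,\lambda)\in\V$ and compare; both sides are continuous operators on $\V$ (the right side by Lemma \ref{L:ops}, applied to $\U\subset\A$), so this suffices. For the left side I would first put the matrix identity $E_K^I=E_{KL}G^{LI}$ into closed form. Since $G^{l_1\ldots l_r k_1\ldots k_r}=\nu^r\sum_{\sigma\in S_r}g^{l_1k_{\sigma(1)}}\cdots g^{l_rk_{\sigma(r)}}$ and $E_{KL}$ is symmetric in $L$, the contraction $\tfrac1{|I|!}G^{LI}(\p/\p\lambda)^I$ collapses into the iterated operator $\bar{\mathcal D}^{l_1}\cdots\bar{\mathcal D}^{l_{|L|}}$, where $\bar{\mathcal D}^l:=\nu g^{lj}\,\p/\p\lambda^j$. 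Using $\exp\{D(\eta,\bar\eta)\}=E_{KL}\eta^K\bar\eta^L$ and the fact that the substitution $\bar\eta^l\mapsto\bar{\mathcal D}^l$ of pairwise commuting operators is a ring homomorphism, one obtains
\[
(Ef)(\nu,\eta)=\bigl[\exp\{D(\eta,\bar{\mathcal D})\}\,f(\nu,\lambda)\bigr]\big|_{\lambda=0},
\]
where $D(\eta,\bar{\mathcal D})$ denotes the result of $\bar\eta^l\mapsto\bar{\mathcal D}^l$ in the power series of $D$. (The regrouping of the triple sum over $K,L,I$ is legitimate: by Proposition \ref{P:operce} and $E_{r,KL}=0$ unless $r+|L|\ge0$ and $r+|K|\ge0$, for each $K$ and each power of $\nu$ only finitely many $(L,I)$ contribute.)

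Next I would split off the summand of $D(\eta,\bar{\mathcal D})$ coming from the vertex type $(1,1,-1)$, the triple excluded from $\mathbf{T}$. Writing $D$ as its mixed Taylor series $\sum_{p\ge1,q\ge1,r}\nu^r\tfrac1{p!q!}\tfrac{\p^{p+q}\Phi_r}{\p z^{c_1}\ldots\p z^{c_p}\p\bar z^{d_1}\ldots\p\bar z^{d_q}}\eta^{c_1}\cdots\eta^{c_p}\bar\eta^{d_1}\cdots\bar\eta^{d_q}$ and substituting $\bar\eta\mapsto\bar{\mathcal D}$, the $(1,1,-1)$ summand becomes $g_{cd}g^{dj}\eta^c\,\p/\p\lambda^j=\eta^c\,\p/\p\lambda^c$, the generator of the shift $\lambda\mapsto\lambda+\eta$, since $g_{cd}=\p^2\Phi_{-1}/\p z^c\p\bar z^d$. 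An elementary check using $p\ge1$, $q\ge1$, $r\ge-1$ shows $(1,1,-1)$ is the only such triple not in $\mathbf{T}$, so the rest is $D'(\eta,\bar{\mathcal D})=\sum_{(p,q,r)\in\mathbf{T}}W_{(p,q,r)}$ with
\[
W_{(p,q,r)}:=\nu^{q+r}\tfrac1{p!q!}\tfrac{\p^{p+q}\Phi_r}{\p z^{c_1}\ldots\p z^{c_p}\p\bar z^{d_1}\ldots\p\bar z^{d_q}}\,g^{d_1 j_1}\cdots g^{d_q j_q}\,\eta^{c_1}\cdots\eta^{c_p}\,\tfrac{\p^q}{\p\lambda^{j_1}\ldots\p\lambda^{j_q}}.
\]
The operators $\eta^c\,\p/\p\lambda^c$ and all the $W_{(p,q,r)}$ commute pairwise (each is a composition of multiplications by $\eta^i$, by functions of $z,\bar z$, and of $\lambda$-derivatives), and each $W_{(p,q,r)}$ carries at least one $\eta$ together with $\nu^{q+r}$, $q+r\ge0$, so the relevant exponentials converge in the $\langle\eta\rangle$-adic sense; hence $\exp\{D(\eta,\bar{\mathcal D})\}=\exp\{\eta^c\,\p/\p\lambda^c\}\exp\{D'(\eta,\bar{\mathcal D})\}$.

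Finally I would expand the right side of (\ref{E:thm}) in the same variables. A graph $\Gamma\in\U$ is determined by a pair $(d,\m)$, where $d=D(\Gamma)\ge0$ is the number of direct source--sink edges; by construction its partition operator is the symmetrization in the lower and upper indices of the product of $d$ Kronecker tensors $\delta_k^i$ (one per direct edge) with, for each internal vertex of type $(p,q,r)$, the tensor $-\tfrac{\p^{p+q}\Phi_r}{\p z^{c_1}\ldots\p z^{c_p}\p\bar z^{d_1}\ldots\p\bar z^{d_q}}g^{d_1 i_1}\cdots g^{d_q i_q}$. Contracting this against the symmetric $\eta^K$ and $(\p/\p\lambda)^I$ removes the symmetrization, turns each $\delta_k^i$ into a copy of $\eta^c\,\p/\p\lambda^c$, and turns the type-$(p,q,r)$ vertex into $-\nu^{-(q+r)}p!q!\,W_{(p,q,r)}$; using $\hat W(\Gamma)=\sum(q+r)\m(p,q,r)$ and $R(\Gamma)=\sum\m(p,q,r)$ and then multiplying by $(-1)^{R(\Gamma)}\nu^{\hat W(\Gamma)}\pp(\Gamma)!/|\Aut(\Gamma)|$ with $|\Aut(\Gamma)|=d!\prod_{(p,q,r)}(p!q!)^{\m(p,q,r)}\m(p,q,r)!$ from (\ref{E:autm}), all factorials, signs and powers of $\nu$ cancel and the contribution of $\Gamma$ reduces to $\tfrac1{d!\prod_{(p,q,r)}\m(p,q,r)!}\bigl[(\eta^c\,\p/\p\lambda^c)^d\prod_{(p,q,r)}W_{(p,q,r)}^{\m(p,q,r)}f(\nu,\lambda)\bigr]_{\lambda=0}$. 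Summing over $\U$, i.e. over all $(d,\m)$, this becomes $\exp\{\eta^c\,\p/\p\lambda^c\}\exp\{D'(\eta,\bar{\mathcal D})\}f\big|_{\lambda=0}$, which by the factorization above equals $(Ef)(\nu,\eta)$; this proves (\ref{E:thm}). The main obstacle is the bookkeeping in this last step --- matching $\hat W(\Gamma)$ to the power of $\nu$ gained from the $q$ copies of $\bar{\mathcal D}=\nu g\,\p/\p\lambda$ at each vertex plus the intrinsic weight $r$, the factor $\pp(\Gamma)!$ to the $1/|I|!$ in the partition operator, and $|\Aut(\Gamma)|$ to the multinomial and internal-symmetry factors in $\exp\{D'\}$ --- together with the conceptual point that the excluded triple $(1,1,-1)$ is precisely the identity-shift operator, which is the reason the graphs in $\U$ (and in $\A$) carry no $(1,1,-1)$ vertices.
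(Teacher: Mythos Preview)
Your argument is correct and follows essentially the same route as the paper's proof: expand $\exp\{D\}$ via the multinomial formula, separate out the contribution of the triple $(1,1,-1)$ (which becomes the direct source--sink edges / the shift operator), and match each remaining monomial with the unique graph in $\U$ of the corresponding type, checking that the combinatorial factors $(-1)^{R(\Gamma)}$, $\nu^{\hat W(\Gamma)}$, $\pp(\Gamma)!$ and $|\Aut(\Gamma)|$ line up with the multinomial coefficients and the $(p!q!)^{\m}$ symmetry factors. The only difference is one of packaging: the paper works directly with the tensor coefficients $E_{KL}$ and $G^{LI}$ (introducing $F_{p,q,r}$ in the variables $\eta,\bar\eta$ and then raising the $\bar\eta$-indices at the end), whereas you first absorb the index-raising into the operator substitution $\bar\eta^l\mapsto\bar{\mathcal D}^l=\nu g^{lj}\p/\p\lambda^j$, so that the whole computation takes place in the algebra of commuting $(\eta,\p_\lambda)$-operators acting on $f$. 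This buys a slightly cleaner bookkeeping of the $\nu$-powers (each vertex automatically carries $\nu^{q+r}$ rather than $\nu^r$ with a later $\nu^\pp$ correction from $G^{LI}$) and makes the role of $(1,1,-1)$ as the identity shift $\eta^c\p/\p\lambda^c$ transparent, but the underlying calculation is the same.
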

\begin{proof}
Denote by $\mathbf{T}'$ the extension of the set $\mathbf{T}$ by the point $(1,1,-1)$,
\[
     \mathbf{T}' = \{(p,q,r) \in \Z^3| p \geq 1, q \geq 1, r \geq -1\}.
\]
We can write the formal Calabi function as an explicit series,
\[
    D(\eta,\bar\eta) = -\sum_{(p,q,r) \in \mathbf{T}'} \frac{\nu^r}{p!q!}F_{p,q,r},
\]
where
\begin{equation}\label{E:fact}
      F_{p,q,r} =  - \frac{\p^{p+q}\Phi_r}{\p z^{k_1} \ldots \p z^{k_p} \p \bar z^{l_1} \ldots \p \bar z^{l_q} }\eta^{k_1}\ldots \eta^{k_p} \bar \eta^{l_1} \ldots \bar\eta^{l_q}.
\end{equation}
We have by the multinomial theorem that
\begin{equation}\label{E:mult}
       \exp\{D(\eta,\bar \eta)\} = \sum_\n \prod_{(p,q,r) \in \mathbf{T}'} \frac{1}{\n(p,q,r)!} \left(\frac{(-1)\nu^r}{p!q!}F_{p,q,r}\right)^{\n(p,q,r)},
\end{equation}
where $\n(p,q,r)$ is the multiplicity function  of the factor  $F_{p,q,r}$.

There is a bijection between the set of all multiplicity functions $\n(p,q,r)$ on $\mathbf{T}'$ and the set of all graph types $(\pp,\qq,\m)$ such that
\begin{equation}\label{E:bij}
  \m = \n|_\mathbf{T}, \pp = \sum_{(p,q,r) \in \mathbf{T}'} q \n(p,q,r),
\mbox{ and } \qq = \sum_{(p,q,r) \in \mathbf{T}'} p \n(p,q,r).
\end{equation}
Fix a multiplicity function $\n(p,q,r)$ and the corresponding graph type $(\pp,\qq,\m)$. Denote by $\Gamma_\n$ a graph from $\U$ of type $(\pp,\qq,\m)$. Notice that the multiplicity of the factor $F_{1,1,-1} = - g_{kl} \eta^k \bar \eta^l$ coincides with the number of edges in $\Gamma_\n$ directly connecting the source and the sink, 
\[
           D(\Gamma_\n) = \n(1,1,-1).
\]
It follows from (\ref{E:autm}) that
\begin{equation}\label{E:aut}
    |Aut(\Gamma_\n)| =\prod_{(p,q,r) \in \mathbf{T}'}\left(p! q!\right)^{\n(p,q,r)}\n(p,q,r)!.
\end{equation}
Denote by $F^\n_{KL}$ the tensor separately symmetric in $K$ and $L$ and such that
\[
       F^\n_{KL} \eta^K \bar \eta^L = \prod_{(p,q,r) \in \mathbf{T}'} \left(F_{p,q,r}\right)^{\n(p,q,r)}.
\]
Raising the antiholomorphic indices in the tensor $F^\n_{KL}$ by the tensor $g^{lk}$ we see that the resulting tensor is $(-1)^{\n(1,1,-1)}\left(\hat \Gamma_\n\right)_K^I$, where $\left(\hat \Gamma_\n\right)_K^I$ is the matrix of the partition operator $\hat \Gamma_\n$ of the graph $\Gamma_\n$,
\[
        (-1)^{\n(1,1,-1)}\left(\hat \Gamma_\n\right)_{k_1 \ldots k_\qq}^{i_1 \ldots i_\pp} =   F^\n_{k_1 \ldots k_\qq l_1 \ldots l_\pp} g^{l_1 i_1}\ldots g^{l_\pp i_\pp},
\]
whence
\begin{equation}\label{E:gfg}
              (-1)^{\n(1,1,-1)}  \pp! \nu^\pp \left(\hat \Gamma_\n\right)_K^I =  F^\n_{KL} G^{LI}.
\end{equation}
We see from (\ref{E:eki}) and (\ref{E:mult}) that the contribution to the matrix $E_K^I$ of the operator $E$ corresponding to the multiplicity function $\n$ is
\[
     \prod_{(p,q,r) \in \mathbf{T}'} \frac{1}{\n(p,q,r)!} \left(\frac{(-1)\nu^r}{p!q!}\right)^{\n(p,q,r)}F^\n_{KL} G^{LI}.
\]
It follows from (\ref{E:bij}), (\ref{E:aut}), and (\ref{E:gfg}) that this contribution is equal to the contribution of  the graph $\Gamma_\n$ to the matrix of the operator on the right-hand side of (\ref{E:thm}), which implies the statement of the theorem.
\end{proof}
In the rest of the paper we will give a direct proof of Theorem \ref{T:operc}. Namely, we will prove that the operator on the right-hand side of (\ref{E:operc}) is inverse to the operator $E$. In order to work with  partition operators of graphs we use the formalism of Feynman and pre-Feynman diagrams developed in \cite{A}.

\section{Feynman diagrams}

In this section we consider Feynman and pre-Feynman diagrams related to weighted directed acyclic multi-graphs with one source and one sink, i.e., graphs from $\A$. A graph $\Gamma$ from $\A$ has nonnegative operator weight $\hat W(\Gamma)$ and each internal vertex has a weight from the set
\[
     W = \{-1,0,2,\ldots \}.
\] 
 We think of a directed edge as of consisting of two half-edges, a tail and a head. A vertex of a graph can be identified with its star (or a block), the set of heads of all incoming edges and tails of all outgoing edges. Given a graph $\Gamma$ over $\A$, we fix enumerations of the edges outgoing from the source and of the edges incoming to the sink of $\Gamma$. We call the set of half edges of $\Gamma$ with two partitions, into blocks and into edges, together with the enumerations of the source and sink blocks, the Feynman diagram of the graph $\Gamma$ and denote it $\F_\Gamma$.  The set of half edges of $\Gamma$ partitioned only into blocks is called the pre-Feynman diagram of $\Gamma$ and denoted $\E_\Gamma$. First we will define pre-Feynman diagrams as standalone objects, not attached to specific graphs.

A {\it pre-Feynman diagram} is a collection of data
\[
     \E = (E, E_h, E_t, V, V_{ext}, V_{int}, v, w),
\]
where $E$ is a finite set of half-edges partitioned into subsets $E_h$ of heads and $E_t$ of tails, $E = E_h\sqcup E_t$, with the same number of elements, $|E_h| = |E_t|$, $V$ is the set of vertices partitioned into a two-element set of external vertices $V_{ext}
= \{\i, \o\}$ (source $\i$ and sink $\o$) and a possibly empty set $V_{int}$ of internal verices, $v: E \to V$ is a surjective vertex mapping such that $v^{-1} (\i) \subset E_t, v^{-1}(\o)\subset E_h$, and $w : V_{int} \to W$ is a weight mapping. We also assume that for each internal vertex $a \in V_{int}$ the set $v^{-1}(a)$ contains at least one head and at least one tail and for each internal vertex of weight $-1$ the set $v^{-1}(a)$ has at least three elements. The set $v^{-1}(a)$ of half-edges is called the block of the vertex $a$. We fix enumerations on the $\i$-block $v^{-1}(\i)$ and on the $\o$-block $v^{-1}(\o)$, i.e., we identify the $\i$-block with the set $[\qq] = \{1,2,\ldots,\qq\}$ and the $\o$-block with $[\pp]= \{1,2,\ldots,\pp\}$, where $\pp = |v^{-1}(\o)|$ and $\qq =| v^{-1}(\i)|$.

We consider the groupoid category (i.e., where the morphisms are iso\-morphisms) {\it PreFey} of pre-Feynman diagrams where a morphism $f : \E \to \E'$ is given by bijections $f_E: E \to E'$
and $f_V:V \to V'$ which commute with the vertex mappings, $v' f_E = f_V v$, and are such that $f_V$ respects the weights of internal vertices, and $f_E$ respects the partition into heads and tails.
The mapping $f_E$ can change the enumerations of the $\i$- and $\o$-blocks.

The type of an internal vertex $a \in V_{int}$ of a pre-Feynman diagram is a triple $(p,q,r) \in \mathbf{T}$ such that $p = |v^{-1}(a) \cap E_h|$ is the number of heads and $q = |v^{-1}(a) \cap E_t|$ is the
number of tails in the block $v^{-1}(a)$ of $a$, and $r = w(a)$ is the weight of $a$. Morphisms respect the type of the vertices.

Given a pre-Feynman diagram $\E$, the multiplicity function $\m(p,q,r)$ returns the number of internal vertices of type $(p,q,r)$. The type of a pre-Feynman diagram $\E$ is a triple $(\pp,\qq, \m)$, where $\qq = |v^{-1}(\i)|, \pp = |v^{-1}(\o)|$, and $\m$ is the
multiplicity function. It determines $\E$ up to isomorphism. The group $G(\E)$ of automorphisms of a pre-Feynman diagram $\E$ of type $(\pp,\qq, \m)$ is of order 
\begin{equation}\label{E:grord}
 \gamma(\pp,\qq,\m) = \pp!\, \qq! \prod_{(p,q,r)\in \mathbf{T}} \m(p,q,r)! (p! q!)^{\m(p,q,r)}.
\end{equation}
It permutes heads and tails separately in each block and permutes blocks and vertices of the same type.

Given a pre-Feynman diagram $\E = (E, E_h, E_t, V, V_{ext},V_{int}, v, w)$, a Feynman diagram $\F$ over $\E$ is given by a
bijection $\sigma : E_t \to E_h$. A Feynman diagram $\F$ over $\E$ defines a directed graph $\Gamma_\F$ with the set of vertices $V$ and with
the edges given by the pairs $(x, \sigma(x)), x \in E_t$. We consider only the Feynman diagrams over $\E$ such that the
corresponding graphs are from $\A$. The type of a Feynman diagram $\F$ over a pre-Feynman diagram $\E$ is defined
to be the same as the type of $\E$ and of the  graph $\Gamma_\F$. Two Feynman diagrams $\F_1$ and $\F_2$ over pre-Feynman diagrams $\E_1$ and $\E_2$, respectively, are isomorphic if there is an underlying isomorphism of $\E_1$ and $\E_2$ which respects the partitions of the sets of half edges $E_1$ and $E_2$ into edges.

A graph $\Gamma$ from $\A$ determines  a pre-Feynman diagram $\E_\Gamma$ and a Feynman diagram $\F_\Gamma$ over $\E_\Gamma$ if we specify enumerations of the outgoing edges of the source and of the incoming edges of the sink of $\Gamma$.

The group $G(\E)$ acts upon Feynman diagrams over $\E$. The $G(\E)$-orbit $G(\E)\F$ of a Feynman diagram $\F$ over $\E$ consists of all Feynman diagrams over $\E$ that are isomorphic to $\F$. It will be denoted $[\F]$.
The stabilizer of a Feynman diagram $\F$ in the group $G(\E)$ is called the automorphism group of $\F$ and denoted $Aut (\F)$. It is naturally isomorphic to the group $Aut(\Gamma_\F)$.

\section{A composition formula}

In this section we give a detailed proof of a composition formula for the operators on the formal Fock space admitting a representation (\ref{E:ops}). The proof uses the formalism of pre-Feynman and Feynman diagrams developed in \cite{A}.

Given graphs $\Gamma_1$ and $\Gamma_2$ from $\A$ of types $(\pp_1,\qq_1,\m_1)$ and  $(\pp_2,\qq_2,\m_2)$, respectively, the product of the  partition operators $\hat\Gamma_1$ and $\hat\Gamma_2$ is nonzero only if $\pp_1 = \qq_2$. The product $\hat\Gamma_1\hat\Gamma_2$ is equal to a linear combination of partition operators of graphs from $\A$ whose Feynman diagrams are expressed in terms of an operation of concatenation $\#$ of Feynman diagrams of $\Gamma_1$ and $\Gamma_2$ introduced below.

We call two Feynman diagrams $\F_1$ and $\F_2$ of types $(\pp_1,\qq_1,\m_1)$ and  $(\pp_2,\qq_2,\m_2)$ composable if $\pp_1 = \qq_2$. Given two composable Feynman diagrams $\F_1$ and $\F_2$ of types $(\pp_1,\qq_1,\m_1)$ and  $(\pp_2,\qq_2,\m_2)$, respectively, we define a Feynman diagram $\F= \F_1 \# \F_2$ of type $(\pp_2,\qq_1,\m_1 + \m_2)$ as follows. We remove the $\o$-block of $\F_1$ and the $\i$-block of $\F_2$ and then connect a loose tail from $E_1$ with a loose head from $E_2$ if they were connected to a removed head and tail with the same number with respect to the enumerations of $v_1^{-1}(\o_1)$ and $v_2^{-1}(\i_2)$. 
A trivial but crucial observation is that if the graphs $\Gamma_{\F_1}$ and $\Gamma_{\F_2}$ are from $\A$, then so is the graph $\Gamma_{\F_1\# \F_2}$.

Let $\F_1$ and $\F_2$ be composable Feynman diagrams  of types $(\pp_1,\qq_1,\m_1)$ and  $(\pp_2,\qq_2,\m_2)$ over pre-Feynman diagrams $\E_1$ and $\E_2$ whose graphs $\Gamma_{\F_1}$ and $\Gamma_{\F_2}$ are from $\A$. Set $n = \pp_1 = \qq_2$ and consider the action of the symmetric group $S_n$ on the $\i$-block $v_2^{-1}(\i_2)$ of $\E_2$ which is identified with the set $[n]$ via an enumeration. The group $S_n$  acts on $\E_2$ by automorphisms and hence acts on the set of Feynman diagrams over $\E_2$. It can be easily verified that
\[
       \hat \Gamma_{\F_1}  \hat \Gamma_{\F_2} =  \frac{1}{n!}\sum_{\tau \in S_n} \hat \Gamma_{\F_1 \# \tau\F_2}.
\]
Denote by $\A(\pp,\qq,\m)$ the set of isomorphism classes of graphs from $\A$ of type $(\pp,\qq,\m)$. It follows that if $\Gamma_1$ and $\Gamma_2$ are graphs from $\A(n,\qq,\m_1)$ and $\A(\pp,n,\m_2)$, respectively, then the product $\hat\Gamma_1\hat\Gamma_2$ of their partition operators is a finite linear combination of partition operators of graphs from $\A(\pp,\qq,\m_1 + \m_2)$. 

Consider a series
\begin{equation}\label{E:format}
      \sum_{[\Gamma]\in\A} \frac{\nu^{\hat W(\Gamma)}a_\Gamma}{|Aut(\Gamma)|}  \hat \Gamma = \sum_{(\pp,\qq,\m)} \nu^{\hat W(\pp,\qq,\m)} \sum_{[\Gamma] \in \A(\pp,\qq,\m)}  \frac{a_\Gamma}{|Aut(\Gamma)|}  \hat \Gamma,
\end{equation}
where $a_\Gamma$ is a complex-valued function on $\A$. According to Lemma \ref{L:ops}, it determines a continuous operator on the formal Fock space $\V$, which will be denoted by $A$. 

Let  $\F$ be a Feynman diagram of a graph $\Gamma$ of type $(\pp,\qq,\m)$ over  a pre-Feynman diagram $\E$ of type $(\pp,\qq,\m)$. The $G(\E)$-orbit of $\F$ is the set of all Feynman graphs over $\E$ isomorphic to $\F$. It is denoted $[\F]$. We have
\begin{eqnarray*}
     \hat\Gamma = \frac{1}{|G(\E)|}\sum_{g \in G(\E)} \hat \Gamma_{g\F} = \frac{|Aut(\F)|}{|G(\E)|}\sum_{[g] \in G(\E)/Aut(\F)} \hat \Gamma_{g\F}\\ =  \frac{|Aut(\F)|}{|G(\E)|}\sum_{\F' \in [\F]} \hat \Gamma_{\F'}.
\end{eqnarray*}
Using formula (\ref{E:grord}) we get
\[
    A = \sum_{(\pp,\qq,\m)} \frac{\nu^{\hat W(\pp,\qq,\m)}}{\gamma(\pp,\qq,\m)}\sum_{\F'} a_{\Gamma_{\F'}} \hat \Gamma_{\F'},
\]
where $\F'$ runs over all Feynman diagrams over a fixed pre-Feynman diagram $\E(\pp,\qq,\m)$ of type $(\pp,\qq,\m)$ such that $\Gamma_{\F'}$ is from $\A(\pp,\qq,\m)$.

Now we want to study products of operators written in the format of (\ref{E:format}). Consider operators
\[
      A = \sum_{[\Gamma]\in\A} \frac{\nu^{\hat W(\Gamma)}a_\Gamma}{|Aut(\Gamma)|}  \hat \Gamma  \mbox{ and }  B = \sum_{[\Gamma]\in\A} \frac{\nu^{\hat W(\Gamma)}b_\Gamma}{|Aut(\Gamma)|}  \hat \Gamma ,
\]
where $a_\Gamma$ and  $b_\Gamma$ are complex-valued functions on $\A$. The product of these operators can be written as follows:
\begin{eqnarray*}
  AB = \sum_{[\Gamma_1],[\Gamma_2]\in\A}\frac{\nu^{\hat W(\Gamma_1) + \hat W(\Gamma_2)}a_{\Gamma_1} b_{\Gamma_2}}{|Aut(\Gamma_1)||Aut(\Gamma_2)|}\hat \Gamma_1 \hat \Gamma_2 = \\
\sum_{(\pp,\qq,\m)}  \sum_{\m' \leq \m}\sum_{[\Gamma_1]\in \A(n,\qq,\m')}\sum_{[\Gamma_2]\in \A(\pp,n,\m-\m')}\nu^{\hat W(\pp,\qq,\m)}Y(\Gamma_1,\Gamma_2),
\end{eqnarray*}
where the inequality $\m' \leq \m$ is pointwise, i.e., $\m'(p,q,r) \leq \m(p,q,r)$ for all internal vertex types $(p,q,r)$,
\begin{equation}\label{E:yy}
  Y(\Gamma_1,\Gamma_2) = \frac{a_{\Gamma_1} b_{\Gamma_2}}{|Aut(\Gamma_1)||Aut(\Gamma_2)|}\hat \Gamma_1 \hat \Gamma_2,
\end{equation}
 and $n$ is uniquely determined by the formula
\begin{eqnarray}\label{E:nnn}
   n = \qq + \sum_{(p,q,r) \in \mathbf{T}} (q-p)\m'(p,q,r) =\\
\pp + \sum_{(p,q,r) \in \mathbf{T}}  (p-q) (\m(p,q,r) - \m'(p,q,r)).\nonumber
\end{eqnarray} 
We have used that for $\Gamma_1$ from $\A(n,\qq, \m')$ and $\Gamma_2$ from $\A(\pp, n, \m - \m')$,
\[
     \hat W(\Gamma_1)+\hat W(\Gamma_2) = \hat W(\pp,\qq,\m).
\]
 We want to simplify formula (\ref{E:yy}), where we assume that $\Gamma_1$ is of type $(n,\qq,\m')$ and $\Gamma_2$ is of type $(\pp,n,\m-\m')$ for a fixed $\m'$ satisfying $\m' \leq \m$. For $i =1,2$,  let $\F_i$  be a Feynman diagram of the graph $\Gamma_i$ over a pre-Feynman diagram $\E_i$. We will use formula (\ref{E:grord}) and the notation
\[
   \binom {\m}{\m'} = \prod_{(p,q,r) \in \mathbf{T}} \binom{\m(p,q,r)}{\m'(p,q,r)}.
\]
We have
\begin{eqnarray*}
    Y(\Gamma_1,\Gamma_2) =\frac{1}{|G(\E_1)||G(\E_2)|}\sum_{g_1 \in G(\E_1)} \sum_{g_2 \in G(\E_2)}\frac{a_{\Gamma_{\F_1}} b_{\Gamma_{\F_2}}}{|Aut(\F_1)||Aut(\F_2)|}\hat\Gamma_{g_1\F_1}\hat\Gamma_{g_2\F_2}\\
=\frac{1}{|G(\E_1)||G(\E_2)|}\sum_{g_1 \in G(\E_1)} \sum_{g_2 \in G(\E_2)}\frac{1}{n!}\sum_{\tau \in S_n}\frac{a_{\Gamma_{\F_1}} b_{\Gamma_{\F_2}}}{|Aut(\F_1)||Aut(\F_2)|}\hat\Gamma_{g_1\F_1 \# \tau g_2\F_2}\\
=\frac{1}{|G(\E_1)||G(\E_2)|}\sum_{g_1 \in G(\E_1)} \sum_{g_2 \in G(\E_2)}\frac{a_{\Gamma_{\F_1}} b_{\Gamma_{\F_2}}}{|Aut(\F_1)||Aut(\F_2)|}\hat\Gamma_{g_1\F_1 \# g_2\F_2}\\
\sum_{[g_1] \in G(\E_1)/Aut(\F_1)} \sum_{[g_2] \in G(\E_2)/Aut(\F_2)}\frac{a_{\Gamma_{\F_1}} b_{\Gamma_{\F_2}}}{|G(\E_1)||G(\E_2)|}\hat\Gamma_{g_1\F_1 \# g_2\F_2}=\\
\sum_{\F' \in [\F_1]} \sum_{\F'' \in [\F_2]}\frac{a_{\Gamma_{\F_1}} b_{\Gamma_{\F_2}}}{\gamma(n,\qq,\m')\gamma(\pp,n,\m-\m')}\hat\Gamma_{\F' \# \F''}=\\
\frac{1}{\gamma(\pp,\qq,\m)}\sum_{\F' \in [\F_1]} \sum_{\F'' \in [\F_2]} \binom {\m}{\m'}\frac{a_{\Gamma_{\F_1}} b_{\Gamma_{\F_2}}}{(n!)^2} \hat\Gamma_{\F' \# \F''}.
\end{eqnarray*}
Now we can write the product $AB$ as follows:
\begin{equation}\label{E:prodab1}
  AB = \sum_{(\pp,\qq,\m)} \frac{\nu^{\hat W(\pp,\qq,\m)}}{\gamma(\pp,\qq,\m)}  \sum_{\m' \leq \m}\sum_{\F',\F''}\binom {\m}{\m'}\frac{a_{\Gamma_{\F'}} b_{\Gamma_{\F''}}}{(n!)^2} \hat\Gamma_{\F' \# \F''},
\end{equation}
where $\F'$ runs over all Feynman diagrams over a fixed pre-Feynman diagram $\E_1(n,\qq,\m')$ of type $(n,\qq,\m')$ such that $\Gamma_{\F'}$ is from $\A(n,\qq,\m')$ and $\F''$ runs over all Feynman diagrams over a fixed pre-Feynman diagram $\E_2(\pp,n,\m-\m')$ of type $(\pp,n,\m-\m')$ such that $\Gamma_{\F''}$ is from $\A(\pp,n,\m-\m')$.

Formula (\ref{E:prodab1}) can be specified further. For each type $(\pp,\qq,\m)$ fix a pre-Feynman diagram $\E(\pp,\qq,\m)$ of that type. Let $\pi$ be a partition of the set $E_{int}$ of internal vertices of $\E(\pp,\qq,\m)$ into an ordered pair of subsets $E'_{int}$ and $E''_{int}$ with multiplicity functions $\m'_\pi$ and $\m-\m'_\pi$, respectively. Construct a pre-Feynman diagram $\E'_\pi$ of type $(n_\pi,\qq,\m'_\pi)$ from the internal blocks of $\E(\pp,\qq,\m)$ corresponding to $E'_{int}$,  $\i$-block $[\qq]$, and $\o$-block $[n_\pi]$. Then construct a pre-Feynman diagram $\E''_\pi$ of type $(\pp,n_\pi,\m-\m'_\pi)$ from the internal blocks of $\E(\pp,\qq,\m)$ corresponding to $E''_{int}$,  $\i$-block $[n_\pi]$, and $\o$-block $[\pp]$. Formula (\ref{E:prodab1}) can be rewritten as follows:
\begin{equation}\label{E:prodab2}
   AB = \sum_{(\pp,\qq,\m)} \frac{\nu^{\hat W(\pp,\qq,\m)}}{\gamma(\pp,\qq,\m)}\sum_\F \left(\sum_\pi \sum_{\F',\F''} \frac{a_{\Gamma_{\F'}} b_{\Gamma_{\F''}}}{(n_\pi!)^2}\right) \hat \Gamma_{\F},
\end{equation}
where for each type $(\pp,\qq,\m)$ we fix $\E(\pp,\qq,\m)$, $\F$ runs over the Feynman diagrams over $\E(\pp,\qq,\m)$ such that $\Gamma_\F$ is from $\A$, $\pi$ is a partition of the internal blocks of $\E(\pp,\qq,\m)$, and $\F'$ and $\F''$ run over all Feynman diagrams over $\E'_\pi$ and $\E''_\pi$, respectively, such that $\Gamma_{\F'}$ and $\Gamma_{\F''}$ are from $\A$ and $\F' \# \F'' = \F$.

Assume that $\pi$ is a partition  of the internal blocks of $\E(\pp,\qq,\m)$ such that there is at least one pair $(\F',\F'')$ of Feynman diagrams corresponding to $\pi$ such that $\F' \# \F'' = \F$.  The Feynman diagrams $\F'$ and $\F''$ are almost completely determined by the partition $\pi$. Namely, the only edges in $\F'$ that are not inherited from $\F$ are those that are incident to the $\o$-block of $\F'$, and the edges of $\F''$ not inherited from $\F$ are those incident to the $\i$-block of $\F''$. Consider an edge in $\F$ that connects a block that belongs to $\E'_\pi$ (either the $\i$-block or an internal block) with a block that belongs to $\E''_\pi$ (either an internal block or the $\o$-block). In $\F'$,  its tail is connected with a head in the $\o$-block $[n_\pi]$. In $\F''$ its head is connected with the tail in the $\i$-block $[n_\pi]$ with the same number. Thus, for each partition $\pi$, there are either zero or $n_\pi!$ pairs of Feynman diagrams $(\F',\F'')$ such that $\F' \# \F'' = \F$, corresponding to different enumerations of the loose ends. Moreover, the isomorphism classes of $\F'$ and $\F''$ do not depend on these enumerations. Therefore, (\ref{E:prodab2}) can be simplified even more: 
\begin{equation}\label{E:prodab3}
     AB = \sum_{(\pp,\qq,\m)} \frac{\nu^{\hat W(\pp,\qq,\m)}}{\gamma(\pp,\qq,\m)}\sum_\F \left(\sum_\pi \frac{a_{\Gamma_{\F'}} b_{\Gamma_{\F''}}}{n_\pi!}\right) \hat \Gamma_{\F},
\end{equation}
where $\F$ runs over the Feynman diagrams over $\E(\pp,\qq,\m)$ such that $\Gamma_\F$ is from $\A$ and $(\F',\F'')$ is {\it any} pair of Feynman diagrams corresponding to the partition $\pi$ such that $\F' \# \F'' = \F$. 

Eventually we group together the summands in the middle sum in (\ref{E:prodab3}) corresponding to equivalent Feynman diagrams. Given a graph $\Gamma$ from $\A$, a partition $\pi$ of the set $E_{int}$ of internal vertices of $\Gamma$  into subsets $E'_\pi$ and $E''_\pi$ will be called admissible if any edge connecting a vertex from $\i \cup  E'_\pi$ with a vertex from $E''_\pi \cup \o$ has a tail in $\i \cup  E'_\pi$ and a head in  $E''_\pi \cup \o$. We denote the number of such edges also by $n_\pi$. Formula (\ref{E:prodab3}) can be finally rewritten in terms of graphs,
\begin{equation}\label{E:prodab4}
       AB = \sum_{[\Gamma] \in \A} \frac{\nu^{\hat W(\Gamma)}}{|Aut(\Gamma)|}\left(\sum_{\pi} \frac{a_{\Gamma'_\pi}b_{\Gamma''_\pi}}{n_\pi !}\right) \hat \Gamma,
\end{equation}
where $\pi$ runs over the admissible partitions of  $E_{int}$,  the graph $\Gamma'_\pi$ is obtained from $\Gamma$ by deleting all vertices from  $E''_\pi$ and all edges connecting vertices within $E''_\pi \cup \o$, and connecting the $n_\pi$ loose edges with the sink $\o$, and graph $\Gamma''_\pi$ is obtained from $\Gamma$ by deleting all vertices from  $E'_\pi$ and all edges connecting vertices within $\i \cup E'_\pi$, and connecting the $n_\pi$ loose edges with the source $\i$. Observe that both $\Gamma'_\pi$ and  $\Gamma''_\pi$ are from $\A$.

The composition formula  (\ref{E:prodab4}) is in the spirit of the product of combinatorial species in the sense of Joyal \cite{AJ}.

Denote by $\hat \A$ the set of continuous operators on the formal Fock space $\V$ admitting a representation (\ref{E:ops}).  According to Lemma \ref{L:ops}, all such linear combinations are continuous operators on $\V$. 

\begin{theorem}\label{T:ahat}
 The set $\hat\A$ is an algebra of continuous operators on the formal Fock space $\V$ with the composition formula  (\ref{E:prodab4}).
\end{theorem}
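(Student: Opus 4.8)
The plan is to observe that the substantial combinatorial work has already been carried out in the derivation of (\ref{E:prodab4}), so that the theorem reduces to a short bookkeeping argument. First I would note that the representations (\ref{E:ops}) and (\ref{E:format}) describe exactly the same class of operators: since $|Aut(\Gamma)|$ is a positive integer depending only on the isomorphism class $[\Gamma]\in\A$, a coefficient function $c_\Gamma$ in (\ref{E:ops}) corresponds to the coefficient function $a_\Gamma = |Aut(\Gamma)|\,c_\Gamma$ in (\ref{E:format}) and conversely. Hence $\hat\A$ is precisely the set of operators admitting a representation in the format (\ref{E:format}); by Lemma \ref{L:ops} every such operator is a well-defined continuous operator on $\V$, and $\hat\A$ is visibly a $\C$-vector space, closed even under the infinite linear combinations allowed in (\ref{E:ops}).

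Next I would take $A,B\in\hat\A$, fix representations of them in the format (\ref{E:format}) with coefficient functions $a_\Gamma$ and $b_\Gamma$, and apply the chain of identities (\ref{E:prodab1})--(\ref{E:prodab4}) established above, whose conclusion writes the composition as
\[
   AB = \sum_{[\Gamma]\in\A}\frac{\nu^{\hat W(\Gamma)}}{|Aut(\Gamma)|}\,c_\Gamma\,\hat\Gamma,\qquad c_\Gamma = \sum_\pi\frac{a_{\Gamma'_\pi}\,b_{\Gamma''_\pi}}{n_\pi!},
\]
with the inner sum running over the admissible partitions $\pi$ of the set of internal vertices of $\Gamma$. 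The point to verify is that $c_\Gamma$ is a genuine complex-valued function on $\A$: a graph from $\A$ has only finitely many internal vertices, hence finitely many partitions of them, so the sum over $\pi$ is finite; and since an isomorphism $\Gamma\to\tilde\Gamma$ carries admissible partitions to admissible partitions and the associated graphs $\Gamma'_\pi,\Gamma''_\pi$ to graphs isomorphic to $\tilde\Gamma'_{\tilde\pi},\tilde\Gamma''_{\tilde\pi}$, while $a_\Gamma$ and $b_\Gamma$ are functions on $\A$, the value $c_\Gamma$ depends only on $[\Gamma]$. Thus $AB$ is again of the format (\ref{E:format}), so $AB\in\hat\A$ and its composition with any further element of $\hat\A$ is again computed by (\ref{E:prodab4}); associativity needs no separate argument since these are ordinary operators on $\V$.

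Finally I would record that $\hat\A$ has a unit, namely the identity operator, which by the discussion in Section~\ref{S:oper} equals $\sum_{n\geq 0}\hat\Lambda_n$, a combination of the partition operators of the graphs $\Lambda_n\in\A$, all of operator weight $\hat W(\Lambda_n)=0$. Together these observations exhibit $\hat\A$ as a unital algebra of continuous operators on $\V$ whose product is given by (\ref{E:prodab4}), which is the assertion of the theorem.

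I do not expect a real obstacle at this stage. The genuinely delicate content --- the concatenation operation $\#$ on Feynman diagrams, the averaging over the $G(\E)$-orbits, and the reorganization of the resulting double sum in terms of admissible partitions of a single graph --- is exactly what (\ref{E:prodab1})--(\ref{E:prodab4}) already accomplish. For the theorem itself, the only things requiring care are the finiteness and the $[\Gamma]$-invariance of the coefficient $c_\Gamma$ and the independence of the whole construction from the chosen isomorphism-class representatives and the chosen enumerations of the source and sink blocks, all of which were in effect settled in deriving (\ref{E:prodab4}).
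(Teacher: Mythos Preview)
Your proposal is correct and follows the same approach as the paper: the paper's own proof simply notes that the theorem is an immediate consequence of Lemma~\ref{L:ops} and formula~(\ref{E:prodab4}). Your additional remarks on the finiteness and $[\Gamma]$-invariance of $c_\Gamma$ and on the unit $\sum_{n\geq 0}\hat\Lambda_n$ are sound elaborations but not essential additions to the argument.
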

\begin{proof}
The theorem is an immediate consequence of Lemma \ref{L:ops} and formula (\ref{E:prodab4}).
\end{proof}

The algebra $\hat\A$ contains the operator $E$ and, as follows from \cite{G} and Theorem \ref{T:operc}, the inverse operator $C = E^{-1}$  given by formula (\ref{E:operc}) also lies in $\hat\A$. In the next section we will give a direct proof of this fact.

\section{An alternative proof of Gammelgaard's formula}

We want to show directly that the product of the right hand sides of (\ref{E:thm}) and  (\ref{E:operc}) ,
\begin{equation}\label{E:prod}
    \left(\sum_{[\Gamma] \in \U} \frac{(-1)^{R(\Gamma)}\nu^{\hat W(\Gamma)}\pp(\Gamma)!}{|Aut(\Gamma)|} \hat \Gamma\right)\left(\sum_{[\Gamma] \in \A} \frac{\nu^{\hat W(\Gamma)}\pp(\Gamma)!}{|Aut(\Gamma)|}\, \hat \Gamma\right),
\end{equation}
is the identity operator on $\V$. Assuming that $a_\Gamma$ is supported on $\U$,  $b_\Gamma$ is supported on $\A$, 
\[
    a_\Gamma = (-1)^{R(\Gamma)}\pp(\Gamma)!, \mbox{ and } b_\Gamma = \pp(\Gamma)!,
\]
we get that formula (\ref{E:prodab4}) gives the product (\ref{E:prod}).
Fix a type $(\pp,\qq,\m)$ and a graph $\Gamma$ from $\A(\pp,\qq,\m)$. Then the sum in the parentheses in formula (\ref{E:prodab4}) corresponding to $\Gamma$ is
\begin{equation}\label{E:suminpar}
   \sum_\pi \frac{a_{\Gamma'_\pi} b_{\Gamma''_\pi}}{n_\pi!} =  \pp!  \sum_\pi  (-1)^{R(\Gamma'_\pi)},
\end{equation}
where $\pi$ in the sum on the right-hand side of (\ref{E:suminpar}) runs over the admissible partitions of the set $E_{int}$ of internal vertices of $\Gamma$ such that $\Gamma'_\pi$ is from $\U$.

Denote by $S$ the set of all internal vertices of $\Gamma$ which have no incoming edges that are outgoing from other internal vertices. Let $\d$ be the multiplicity function of $S$. Given an admissible partition $\pi$ of the set $E_{int}$ of internal vertices of $\Gamma$ such that $\Gamma'_\pi$ is from $\U$, then  $E'_\pi \subset S$.  Vice versa, if $E'$ is any subset of $S$, then the partition $\pi$ of $E_{int}$ into the subsets $E'_\pi = E'$  and $E''_\pi =E_{int} \backslash E'$ is admissible and the graph $\Gamma'_\pi$ is from $\U$. If $\m \neq 0$, i.e., $\m(p,q,r) \neq 0$ for at least one type $(p,q,r)$, then the set $S$ is nonempty, $\d \neq 0$, and we have from the abovementioned considerations that
\begin{eqnarray*}
    \sum_{\pi} (-1)^{R(\Gamma'_\pi)} = \sum_{\m' \leq \d} (-1)^{\sum_{(p,q,r)\in \mathbf{T}} \m'(p,q,r)} \prod_{(p,q,r)\in \mathbf{T}}\binom{\d(p,q,r)}{\m'(p,q,r)}\\
= \prod_{(p,q,r)\in \mathbf{T}} \sum_{i=0}^{\d(p,q,r)} (-1)^i \binom{\d(p,q,r)}{i}=0.
\end{eqnarray*}
Thus the summands in   (\ref{E:prodab4}) corresponding to the graphs from  $\A(\pp,\qq,\m)$ with $\m \neq 0$ do not contribute to the product (\ref{E:prod}).

Consider a graph of type $(\pp,\qq,\m)$ with $\m=0$, i.e.,  with no internal vertices. Then $\pp=\qq$ and we denote their common value by $n$. We used the notation $\Lambda_n$ for such a graph.  We have $R(\Lambda_n) = 0, \hat W(\Lambda_n) = 0,$ and $|Aut(\Lambda_n)| = n!$. The set $\A(n,n,0)$ consists of the single isomorphism class of the graph $\Lambda_n$.  It follows from (\ref{E:suminpar}) that the contribution to  (\ref{E:prodab4}) corresponding to the type $(n,n,0)$ is $ \hat \Lambda_n$. According to (\ref{E:ident}), the total contribution to (\ref{E:prod}) corresponding to the types $(n,n,0), n \geq 0$,  is the identity operator. Thus we have shown that the operator on the formal Fock space $\V$ given by the right-hand side of (\ref{E:operc}) is right-inverse to the invertible operator $E$ and therefore is equal to the operator $C$.

\end{document}